\documentclass{article}

\usepackage{wrapfig}
\usepackage[all]{xy}
\usepackage{amsmath}
\usepackage{amsthm}
\usepackage{amssymb}
\usepackage{mathrsfs}
\usepackage{amsthm}

\newtheorem{thm}{Theorem}[section]

\newtheorem{cor}[thm]{Corollary}
\newtheorem{lem}[thm]{Lemma}

\theoremstyle{definition}
\newtheorem{defn}[thm]{Definition}
\newtheorem{rmk}[thm]{Remark}


\newcommand{\sG}{\mathscr{G}}
\newcommand{\sH}{\mathscr{H}}

\newcommand{\cE}{\mathcal{E}}
\newcommand{\cG}{\mathcal{G}}
\newcommand{\cH}{\mathcal{H}}
\newcommand{\cL}{\mathcal{L}}

\newcommand{\cO}{\mathcal{O}}
\newcommand{\cR}{\mathcal{R}}

\newcommand{\C}{\mathbb{C}}

\newcommand{\Q}{\mathbb{Q}}

\newcommand{\Z}{\mathbb{Z}}

\newcommand{\bfB}{\mathbf{B}}
\newcommand{\bfD}{\mathbf{D}}



\newcommand{\Fb}{{\overline{F}}}

\newcommand{\GL}{\mathrm{GL}}

\DeclareMathOperator{\Ker}{\mathrm{Ker}}
\DeclareMathOperator{\Gal}{\mathrm{Gal}}

\DeclareMathOperator{\Hom}{\mathrm{Hom}}
\DeclareMathOperator{\End}{\mathrm{End}}
\DeclareMathOperator{\Ext}{\mathrm{Ext}}

\DeclareMathOperator{\Frac}{\mathrm{Frac}}
\DeclareMathOperator{\Lie}{\mathrm{Lie}}


\newcommand{\ur}{\mathrm{ur}}

\newcommand{\an}{\mathrm{an}}
\newcommand{\pGmod}{(\varphi_q,\Gamma)\mathrm{\mathchar`-mod}/\mathcal{R}}
\newcommand{\pGMod}{\mathrm{Mod}^{\varphi_q,\Gamma}}
\newcommand{\pGModEt}{\mathrm{Mod}^{\varphi_q,\Gamma,\mathrm{\acute{e}t}}}
\newcommand{\Rmod}{\mathcal{R}\mathrm{\mathchar`-mod}}
\newcommand{\id}{\mathrm{id}}
\newcommand{\LT}{\mathrm{LT}}

\newcommand{\oc}{\mathrm{oc}}

\DeclareMathOperator{\Rep}{\mathrm{Rep}}

\newcommand{\isomto}{\xrightarrow{\sim}}

\newcommand{\cln}{\colon}

\newcommand{\smin}{\smallsetminus}


\newcommand{\vphi}{\varphi}
\newcommand{\vpi}{\varpi}

\def\maru#1{{\ooalign{\hfil
			\ifnum#1>999 \resizebox{.25\width}{\height}{#1}\else%
			\ifnum#1>99 \resizebox{.33\width}{\height}{#1}\else%
			\ifnum#1>9 \resizebox{.5\width}{\height}{#1}\else #1%
			\fi\fi\fi%
			\/\hfil\crcr%
			\raise.167ex\hbox{\mathhexbox20D}}}}

\usepackage{mathtools}
\mathtoolsset{showonlyrefs = true}



\author{Megumi Takata\thanks{Kyushu Sangyo University,
		3-1 Matsukadai 2-chome,
		Higashi-ku, Fukuoka
		813-8503 Japan. \newline E-mail address: \texttt{m.takata@ip.kyusan-u.ac.jp}
		\newline 2010 \textit{Mathematics Subject Classification.} 11S20, 11S31.}}
\title{On a relation of overconvergence and $F$-analyticity on $p$-adic Galois representations of a $p$-adic field $F$}

\begin{document}
	
	
	\maketitle
	
	\begin{abstract}
		Let $p$ be a prime number.
		There are properties called ``overconvergence'' and ``$F$-analyticity'' for $p$-adic Galois representations of a $p$-adic field $F$.
		By Berger's work, it is known that $F$-analyticity is stricter than  overconvergence.
		In this article, we show that, in many cases, an overconvergent Galois representation is $F$-analytic up to a twist by a character.
		This result emphasizes the necessity of the theory of $(\varphi,\Gamma)$-modules over the multivariable Robba ring, by which we expect to study all $p$-adic Galois representations.
	\end{abstract}

	\section{Introduction}
	
	In the $p$-adic local Langlands program for $\GL_2(\Q_p)$, which has finally been established in \cite{Colmez2010b} and \cite{CDP2014},
	an important result in the early stage is a theorem due to Cherbonnier and Colmez \cite{CC1998}.
	It says that all of the $p$-adic Galois representations of a $p$-adic field are \textit{overconvergent} with respect to the cyclotomic $\Z_p$-extension.
	It enables us to study every $p$-adic Galois representation of a $p$-adic field in the terms of modules over the univariable Robba ring, which is the ring of (not necessarily bounded) functions on $p$-adic annuli.
	One of the benefits appears in Berger's work \cite{Berger2002}, which relates the following three notions:
	1) $(\vphi,\Gamma)$-modules,
	2) $p$-adic differential equations, and
	3) filtered $(\vphi, N)$-modules.
	
	Thus, if we want to generalize the $p$-adic local Langlands program from $\GL_2(\Q_p)$ to $\GL_2(F)$ for an arbitrary $p$-adic field $F$,
	a similar question occurs:
	is any $p$-adic Galois representation of $F$ overconvergent if we consider a general Lubin-Tate extension instead of the cyclotomic one?
	For this, Fourquaux and Xie \cite{FX2013} show a negative answer.
	That is, contrary to the cyclotomic case, there exist (an infinite number of) $p$-adic Galois representations which is not overconvergent with respect to a Lubin-Tate extension.
	Then we face a next question: how strict is  overconvergence in a general Lubin-Tate setting?
	Berger \cite{Berger2016} has proved that, an another property called \textit{$F$-analyticity} is a sufficient condition for  overconvergence.
	In addition, he has proved the following:
	\begin{thm}[{\cite[Cor. 4.3]{Berger2013}}]\label{PreviousResearchOfBerger}
		Any absolutely irreducible and overconvergent $p$-adic Galois representations of $F$ are $F$-analytic up to twist by a character.
	\end{thm}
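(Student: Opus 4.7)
The plan is to apply Schur's lemma to Sen-type infinitesimal operators $\Theta_\tau$ coming from the non-identity $\tau\colon F\hookrightarrow E$ directions of $\Lie(\Gamma)\otimes_{\Q_p} E$, and cancel the resulting scalars by twisting with a suitable character.

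Let $V$ be absolutely irreducible and overconvergent, with coefficients in a sufficiently large $E/\Q_p$ containing every embedding of $F$. Since $\Gamma\cong\mathcal{O}_F^\times$ is a commutative $F$-analytic Lie group of dimension one, $\Lie(\Gamma)=F$, and
\[
  F\otimes_{\Q_p} E \;\cong\; \prod_{\tau\colon F\hookrightarrow E} E.
\]
Differentiating the $\Gamma$-action on the Sen module (equivalently, on the locally analytic vectors of the overconvergent module $D^\dagger_{\rig}(V)$) produces a Sen-type operator $\Theta_\tau$ for every $\tau$. Overconvergence is precisely what allows one to characterise $F$-analyticity of $V$ as the vanishing of $\Theta_\tau$ for every $\tau\neq\id$.

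The key reduction is that each $\Theta_\tau$ is $\widehat{F}_\infty$-linear (it is derived from a Sen-type construction on a finite-dimensional module) and commutes with $\Gamma$: indeed $\gamma\Theta_\tau\gamma^{-1}=\mathrm{Ad}(\gamma)(\Theta_\tau)=\Theta_\tau$, because $\Gamma$ is abelian and the adjoint action on its Lie algebra is trivial. Hence $\Theta_\tau$ is an endomorphism of the $\widehat{F}_\infty[\Gamma]$-module $D_{\mathrm{Sen}}(V)$. Absolute irreducibility of $V$ translates into irreducibility of $D_{\mathrm{Sen}}(V)$ as an $\widehat{F}_\infty[\Gamma]$-module, so Schur's lemma yields $\End_{\widehat{F}_\infty[\Gamma]}(D_{\mathrm{Sen}}(V))=E$, whence $\Theta_\tau = \lambda_\tau\cdot\id$ for some $\lambda_\tau\in E$ for each $\tau\neq\id$.

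To finish, I would produce a continuous character $\chi\colon G_F\to E^\times$ whose infinitesimal Sen weight in each non-identity direction $\tau$ equals $-\lambda_\tau$; such a $\chi$ exists by local class field theory, since continuous characters of $G_F$ correspond to those of $F^\times$ and their infinitesimal weights fill out all of $F\otimes_{\Q_p} E$. Then $V\otimes\chi$ has all non-identity Sen operators zero, and by the overconvergence-based characterisation it is $F$-analytic. The main technical obstacle I expect lies in the precise identification between $F$-analyticity and vanishing of non-identity Sen operators for overconvergent representations, and more generally in the careful setup of the $\Theta_\tau$ at a level at which Schur's lemma genuinely applies; the final character-existence step should be routine via the Lubin--Tate parametrisation.
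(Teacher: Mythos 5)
There is a genuine gap in the Schur step. You claim that absolute irreducibility of $V$ forces $\End_{\widehat{F}_\infty[\Gamma]}(D_{\mathrm{Sen}}(V)) = E$, and hence that each $\Theta_\tau$ is scalar. This is false: the Sen functor is not fully faithful on $E$-linear Galois representations, and passing to $D_{\mathrm{Sen}}(V)$ discards the Frobenius structure that actually controls endomorphisms. Concretely, if $V$ is two-dimensional, absolutely irreducible, and has two distinct $\id$-Hodge--Tate--Sen weights lying in $E$, then $\Theta_{\id}$ has two distinct eigenvalues; its eigenspaces are $\widehat{F}_\infty$-subspaces stable under $\Gamma$ (because $\Gamma$ is abelian, so $\gamma\Theta_{\id}\gamma^{-1}=\Theta_{\id}$, and the eigenvalues are fixed by $\Gamma$), so $D_{\mathrm{Sen}}(V)$ splits as a $\Gamma$-module and $\End_{\widehat{F}_\infty[\Gamma]}(D_{\mathrm{Sen}}(V))$ is at least $E\times E$. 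Schur's lemma therefore does not deliver $\Theta_\tau = \lambda_\tau\cdot\id$.

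The proof in this paper (the $r=1$ case of Theorem~\ref{trueMainThm}, which subsumes Theorem~\ref{PreviousResearchOfBerger}) differentiates the $\Gamma$-action on the $(\vphi_q,\Gamma)$-module $D=\cR\otimes_{\cE^\dagger}\bfD^\dagger(V)$ itself, not on its Sen module. For $\beta,\beta'\in\Lie\Gamma$ one checks that $\nabla_\beta-\nabla_{\beta'}$ is $\cR$-linear and commutes with \emph{both} $\Gamma$ \emph{and} $\vphi_q$, hence lies in $\End_{\pGmod}(D)$; and here the equivalence of categories from overconvergence (Fontaine, Kisin--Ren, Fourquaux--Xie) gives $\End_{\pGmod}(D)=\End_{\sG_F}(V)=L$. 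The $\vphi_q$-commutation is exactly the extra rigidity your reduction throws away, and it is also where overconvergence genuinely enters --- without it the $(\vphi_q,\Gamma)$-module over $\cR$ is not available. Once this is repaired, your closing step of constructing the twisting character is in the same spirit as the paper's, which carries it out explicitly via the Lubin--Tate parametrisation.
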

	We remark that Berger and Fourquaux \cite[Th.\ 1.3.1, Cor.\ 1.3.2]{BF2017} describe an arbitrary overconvergent representation by using an $F$-analytic one and one which factors through $\Gamma$, where $\Gamma$ is the Galois group of the considering Lubin-Tate extension.
	
	The main theorem of this article is a generalizaion of Theorem \ref{PreviousResearchOfBerger}.
	To state it in detail, we need to introduce some notations.
	We fix an algebraic closure $\Fb$ of $F$ and write $\sG_F = \Gal(\Fb/F)$.
	Let $L \subset \Fb$ denote a finite extension of $F$.
	Let $F_\vpi^\LT$ be the Lubin-Tate extension of $F$ with respect to a fixed uniformizer $\vpi \in F$.
	We write $\Gamma = \Gal(F_\vpi^\LT/F)$.
	Let $q$ be the order of the residue field of $F$.
	Let $\cR$ be the Robba ring with coefficients in $L$.
	It has an action of $\Gamma$ and $\vphi_q$, where $\vphi_q$ is a lift of the $q$-th power map.
	We have a fully faithful functor $\cR\otimes_{\cE^\dagger}\bfD^\dagger$ from the category of $L$-representations of $\sG_F$
	to that of the $(\vphi_q, \Gamma)$-modules over $\cR$
	 (for more precise descriptions of $\cR$, the notion of $(\vphi_q, \Gamma)$-modules, and $\cR\otimes_{\cE^\dagger}\bfD^\dagger$, see \S 2).
	\begin{thm}\label{MainThm}
		Let $V$ be an overconvergent $L$-representation of $\sG_F$.
		Suppose that $D = \cR\otimes_{\cE^\dagger}\bfD^\dagger(V)$ has a filtration
		\[
		0 = D_0 \subset D_1 \subset \cdots \subset D_r = D
		\]
		of $(\vphi_q, \Gamma)$-modules over $\cR$.
		We put $\Delta_i = D_i/D_{i-1}$.
		We assume that the following conditions hold.
		
		\begin{enumerate}
			\renewcommand{\theenumi}{{\rm (\alph{enumi})}}
			\item For any $1 \leq i \leq r,$ we have $\End_{\pGmod} (\Delta_i) = L$.
			\item For any $1 \leq i<j \leq r$, we have $\Hom_{\pGmod} (\Delta_j,\Delta_i) = 0$.
			\item For any $1 < i \leq r,$ the short exact sequence 
			$ 0 \to D_{i-1} \to D_i \to \Delta_i \to 0$ does not split.
		\end{enumerate}
		Then there exist a finite extension $L'$ of $L$ and a character $\delta\cln \sG_F^\times \to (L')^\times$ such that
		$V\otimes_L L'(\delta)$ is $F$-analytic.
	\end{thm}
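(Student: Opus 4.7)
The plan is to study, for each non-identity embedding $\tau \cln F \immto \Fb$, the ``Sen-type'' operator $\nabla_\tau$ on $D$ obtained by differentiating the $\Gamma$-action in the direction of $\tau$; this is available after base-changing $L$ to a sufficiently large finite extension $L'$ containing all embeddings of $F$, where we may further refine the filtration if needed so that (a)--(c) remain in force. Since $\cR$ is $F$-analytic by construction, $\nabla_\tau$ vanishes on $\cR$ for $\tau \neq \id$, making $\nabla_\tau$ an $\cR$-linear operator on $D$. Moreover, because $\Gamma$ is abelian and commutes with $\vphi_q$, this $\nabla_\tau$ commutes with both, so $\nabla_\tau \in \End_{\pGmod}(D)$; recall that $V$ is $F$-analytic precisely when every such $\nabla_\tau$ vanishes.

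Since the filtration is $\Gamma$-stable, each $\nabla_\tau$ preserves it and induces endomorphisms on every graded piece. By (a), the induced endomorphism of $\Delta_i$ is multiplication by some $c_{i,\tau} \in L'$. I will show by induction on $i$ that $c_{i,\tau} = c_{1,\tau} =: c_\tau$. Assume $\nabla_\tau - c_{1,\tau}$ has been shown to kill $D_{i-1}$. Then on $D_i$, this operator factors through the quotient, yielding a morphism $f \cln \Delta_i \to D_i$ whose composition with the projection $D_i \twoheadrightarrow \Delta_i$ equals $(c_{i,\tau} - c_{1,\tau}) \cdot \id$. If $c_{i,\tau} \neq c_{1,\tau}$, then $(c_{i,\tau} - c_{1,\tau})^{-1} f$ would split the extension $0 \to D_{i-1} \to D_i \to \Delta_i \to 0$, contradicting (c). Hence $c_{i,\tau} = c_{1,\tau}$, and $f$ has image inside $D_{i-1}$; a short d\'evissage using (b) applied to the sub-filtration of $D_{i-1}$ shows $\Hom_{\pGmod}(\Delta_i, D_{i-1}) = 0$, so $f = 0$ and the induction closes. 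Consequently $\nabla_\tau = c_\tau \cdot \id$ on all of $D$ for every $\tau \neq \id$.

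To conclude, I construct via local class field theory a continuous locally $\Q_p$-analytic character $\delta \cln \sG_F \to (L')^\times$ (enlarging $L'$ if necessary) whose Sen weight at each non-identity embedding $\tau$ equals $-c_\tau$; such a $\delta$ exists by the exponential construction on a subgroup $1 + \fkp^N \subset F^\times$ followed by an arbitrary extension to all of $F^\times$. Then $\nabla_\tau$ on $D \otimes \cR(\delta)$ becomes $c_\tau + (-c_\tau) = 0$, so $V \otimes_L L'(\delta)$ is $F$-analytic. The principal technical obstacle is the rigorous construction of the operators $\nabla_\tau$ on $(\vphi_q, \Gamma)$-modules over $\cR$ and the verification of their $\vphi_q$- and $\Gamma$-equivariance; granting this input from Berger's Lubin--Tate Sen theory \cite{Berger2016, BF2017}, the algebraic induction and the character construction above are both routine.
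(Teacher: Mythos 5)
Your proof is correct, and it takes a genuinely different (and somewhat leaner) route than the paper's. The paper proves the $(\vphi_q,\Gamma)$-module statement by induction on the length $r$ of the filtration: the base case $r=1$ uses the operators $\nabla_{\beta_1}-\nabla_{\beta_i}\in\End_{\pGmod}(D)$ for a $\Z_p$-basis $\beta_1,\dots,\beta_n$ of $\Lie\Gamma$ (so no scalar extension is needed until the character is constructed), and the inductive step passes through two pieces of $\Ext$-machinery: a lemma showing that a non-split extension by an $F$-analytic module with $\End=L$ forces the quotient to be $F$-analytic, and a corollary (resting on the codimension formula $\mathrm{codim}\,\Ext_{\an}=[F:\Q_p]\dim\Hom$ from \cite{FX2013}) showing $\Ext_{\an}(\Delta_r,D_{r-1})=\Ext(\Delta_r,D_{r-1})$ when $\Hom_{\pGmod}(\Delta_r,D_{r-1})=0$. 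You instead decompose the Lie-algebra action into the $\tau$-components $\nabla_\tau$ after extending scalars to $L'$, observe these lie in $\End_{\pGmod}(D\otimes L')$, and run a single induction on the filtration index $i$ to show directly that $\nabla_\tau=c_\tau\cdot\id$ on each $D_i$: (a) makes the induced operator on $\Delta_i$ a scalar, (c) forces that scalar to agree with $c_\tau$ (else the resulting morphism $\Delta_i\to D_i$ would split the extension), and (b) plus a d\'evissage kills the residual map $\Delta_i\to D_{i-1}$. This bypasses the explicit $\Ext$-group identification and the codimension formula entirely; the two proofs are morally the same (conditions (a), (c), (b) play exactly analogous roles) but yours makes the mechanism more transparent and self-contained. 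Two small remarks: first, the caveat about ``refining the filtration'' after base change is unnecessary -- conditions (a)--(c) are stable under the faithfully flat extension $L\to L'$ (the $\Hom$/$\End$ spaces and $\Ext^1$ simply tensor up), and an actual refinement could in fact break (a); second, the paper's route via $\nabla_{\beta}$ has the minor advantage of postponing the scalar extension entirely to the final character construction, though this is cosmetic.
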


	\begin{rmk}
		\begin{enumerate}
			\renewcommand{\theenumi}{{(\roman{enumi})}}
			\item The case $r=1$ of Theorem \ref{MainThm} says that, if an overconvergent $L$-representation $V$ of $\sG_F$ satisfies $\End(V) =L$, then $V$ becomes $F$-analytic after twisting by a character.
			It is still a generalization of Theorem \ref{PreviousResearchOfBerger}.
			\item Off course, there are overconvergent representations which can not be $F$-analytic after twisting by any character:
			for example, the direct sum of $F$-analytic one and not $F$-analytic one.
			However, a large part of overconvergent representations seem to satisfy the conditions of Theorem \ref{MainThm}:
			for example, most of trianguline representations.
			While, $F$-analyticity is very strict since, by definition, it demands the Hodge-Tate weights with respect to any $\tau\in \Hom(F,\Fb)\smin \{\id_F\}$
			to be zero.
			Hence this result predicts that the size of the class of overconvergent representations is thought to be very small in the whole.
			Therefore we seem to need the theory of \textit{$(\varphi,\Gamma)$-modules over the multivariable Robba ring} (\cite{Berger2013}, \cite{Berger2016}, \cite{BF2017}), by which we expect to study all $p$-adic Galois representations.
		\end{enumerate}
	\end{rmk}
	
	In \S 2, we recall some definitions and theorems on the Lubin-Tate $(\vphi_q,\Gamma)$-modules.
	In particular, we define overconvergence and $F$-analyticity here.
	In \S 3, after introducing several lemmas, we prove Theorem \ref{trueMainThm}.
	Then we obtain Theorem \ref{MainThm} as a corollary.
	
	\subsubsection*{Acknowledgments}
	The author would like to thank Kentaro Nakamura for many useful comments, in particular, for suggesting him to generalize the original version of the main theorem.
	
	\section{Preliminary on the Lubin-Tate $(\vphi_q,\Gamma)$-modules}
	
	In this section, we recall some notion on the Lubin-Tate $(\vphi_q,\Gamma)$-modules.
	
	Let $F$ be a finite extension of $\Q_p$, $\cO_F$ the ring of integers of $F$, and $q$ the order of the residue field of $\cO_F$.
	We fix an algebraic closure $\Fb$ of $F$.
	We write $\sG_F = \Gal(\Fb/F)$.
	
	We fix a uniformizer $\vpi$ of $F$ and a power series $f_\vpi(T) \in \cO_F[\![T]\!]$ such that
	$f_\vpi(T) \equiv \vpi T$ modulo degree 2 and $f_\vpi(T) \equiv T^q$ modulo $\vpi$.
	Then there exists a unique formal group law $\cG_\vpi(X,Y) \in \cO_F[\![ X,Y ]\!]$ such that $f_\vpi \in \End(\cG_\vpi)$,
	which we call \textit{the Lubin-Tate formal group associated to $\vpi$}.
	It has a natural formal $\cO_F$-module structure $[\cdot]\cln \cO_F \to \End(\cG_\vpi)$ such that, for any $a\in \cO_F$, the first term of $[a](T)$ is $aT$ and $[\vpi](T) = f_\vpi(T)$.
	For any $a\in \cO_F$, we put $\cG_\vpi[a] = \{ \alpha\in \Fb \mid [a](\alpha) = 0 \}.$
	We denote by $F_\vpi^\LT$ the extension of $F$ in $\Fb$ obtained by adding all elements of $\cG_\vpi[\vpi^n]$ for all integers $n\geq 1$.
	We call $F_\vpi^\LT$ \textit{the Lubin-Tate extension of $F$ associated to $\vpi$}.
	We define the Tate module $T\cG_\vpi$ of $\cG_\vpi$ by $T\cG_\vpi = \varprojlim_n \cG[\vpi^n].$
	This is a free $\cO_F$-module of rank 1 on which $\sG_F$ naturally acts.
	It induces a character $\chi_\LT\cln \sG_F \to \cO_F^\times$, which we call \textit{the Lubin-Tate character associated to $\vpi$}.
	We write $\sH_F = \Gal(\Fb/F_\vpi^\LT)$, which is the kernel of $\chi_\LT$.
	For any integer $n\geq 1$, we put
	\begin{align*}
	\Gamma &= \Gal(F_\vpi^\LT/F) = \sG_F/\sH_F \isomto \cO_F^\times \quad \mbox{and} \\
		\Gamma_n &= \Gal(F_\vpi^\LT/F(\cG_\vpi[\vpi^n])).
	\end{align*}
	
	Let $L \subset \Fb$ be a finite extension of $F$.
	We put
	\[
		\cO_\cE = \cO_{\cE_L} = \left\{ \sum_{n\in \Z} a_n T^n \ \middle| \  a_n\in \cO_L \mbox{ for any $n\in \Z$}, \ a_n \to 0 \ (n\to -\infty) \right\},
	\]
	which is a discrete valuation ring such that a uniformizer $\vpi_L$ of $L$ generates the maximal ideal.
	We put $\cE = \cE_L = \Frac(\cO_\cE)$.
	In this article, we consider \textit{the weak topology}.
	For any $n\in \Z$, we define a topology on $\vpi_L^n\cO_\cE$ with $\{\vpi_L^{i+n}\cO_\cE + T^j\vpi_L^n\cO_L [\![T]\!] \}_{i,j}$ as a fundamental system of neighborhoods of 0.
	We define a topology on $\cE = \cup_{n\in\Z} \vpi_L^n \cO_\cE$ by the inductive limit topology.
	
	An $L$-linear $(\vphi_q,\Gamma)$-action on $\cO_\cE$ or on $\cE$ is defined such that, for any $\gamma\in \Gamma$, we have
	\[
		\vphi_q(T) = f_\vpi(T), \quad \gamma.T = [\chi_\LT(\gamma)](T).
	\]
	A free $\cO_\cE$-module $D_0$ of finite rank is called a \textit{$(\vphi_q,\Gamma)$-module over $\cO_\cE$} if:
	\begin{enumerate}
		\item $\Gamma$ acts continuously and semilinearly on $D_0$ and
		\item a $\vphi_q$-semilinear map $\varPhi\cln D_0 \to D_0$ is equipped such that
		\begin{itemize}
			\item the action of $\Gamma$ and $\varPhi$ commute, and
			\item the $\cO_\cE$-linear homomorphism $\cO_\cE \otimes_{\vphi_q,\cO_\cE} D_0 \to D_0$ induced by $\varPhi$ is an isomorphism.
		\end{itemize}
	\end{enumerate}
	We abuse notation and use the same symbol $\vphi_q$ to denote $\varPhi$.
	The notion of \textit{$(\vphi_q,\Gamma)$-modules over $\cE$} is defined in a similar way as above.
	We call a $(\vphi_q,\Gamma)$-module $D$ over $\cE$ \textit{\'etale} if there exists a $(\vphi_q,\Gamma)$-module $D_0$ over $\cO_\cE$ such that $D \simeq \cE\otimes_{\cO_\cE} D_0$ as $(\vphi_q,\Gamma)$-modules over $\cE$.
	
	We denote by $\Rep_L (\sG_F)$ the category of continuous finite dimensional $L$-representations of $\sG_F$
	and by $\pGMod_{/\cE}$ the category of $(\vphi_q,\Gamma)$-modules over $\cE$.
	Let $\pGModEt_{/\cE}$ denote the full subcategory of $\pGMod_{/\cE}$ consisting of \'etale objects.
	Fontaine have found an equivalence of categories of $\Rep_L (\sG_F)$ and $\pGModEt_{\cE}$ in the cyclotomic case.
	In the general Lubin-Tate cases, it is given by Kisin and Ren.
	To construct it, we use a big field $\bfB$, which contains $\cE_F$ and has a ($\vphi_q,\sG_F$)-action.
	Moreover, we have $\bfB^{\sH_F} = \cE_F$.
	For a precise definition of $\bfB$, the reader may refer to \cite{KR2009} or \cite[1B]{FX2013}.
	Note that, in \cite{KR2009} (resp. \cite{FX2013}), this is denoted by $\widehat{\cE}^\ur$ (resp. $\mathrm{B}$).
	\begin{thm}[{\cite[Th.\ 3.4.3, Rem.\ 3.4.4]{Fon1991}, \cite[Th.\ 1.6]{KR2009}}]
		We have a functor
		\[
			\bfD \cln \Rep_L (\sG_F) \to \pGModEt_{/\cE} \cln V\mapsto (V\otimes_{F}\bfB)^{\sH_F},
		\]
		which gives an equivalence of categories.
		Its quasi-inverse functor is given by $D \mapsto (D\otimes_{\cE_F} \bfB)^{\vphi_q=1}$.
	\end{thm}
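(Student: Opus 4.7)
The plan is to follow the standard Fontaine-style argument, adapted to the Lubin--Tate setting by Kisin--Ren. First, I would verify that $\bfD(V) = (V\otimes_F \bfB)^{\sH_F}$ is a well-defined object of $\pGModEt_{/\cE}$. The commuting $(\vphi_q, \sG_F)$-actions on $\bfB$ induce commuting $(\vphi_q, \sG_F)$-actions on $V\otimes_F \bfB$, and passing to $\sH_F$-invariants yields a module over $\bfB^{\sH_F} = \cE_F$ carrying commuting actions of $\vphi_q$ and $\Gamma = \sG_F/\sH_F$. After extending scalars along $\cE_F \hookrightarrow \cE = \cE_L$, one obtains a $(\vphi_q, \Gamma)$-module over $\cE$.

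The core technical step is to prove that $\bfD(V)$ is a free $\cE$-module of rank $\dim_L V$ and that the linearized Frobenius $\vphi_q^*\bfD(V) \to \bfD(V)$ is an isomorphism. I would reduce to a $\sG_F$-stable $\cO_L$-lattice $T \subset V$ and establish the analogous integral statement for $(T \otimes_{\cO_F} \cO_\bfB)^{\sH_F}$ by induction on the $\vpi_L$-length of $T/\vpi_L^n T$. The base case rests on an Artin--Schreier-type presentation of $\sH_F$-extensions via $\vphi_q - 1$ inside $\cO_\bfB/\vpi$, which is essentially the content of the field-of-norms equivalence for Lubin--Tate towers used in \cite{KR2009}; concretely, what is needed is a vanishing of $H^1(\sH_F, \GL_d(\cO_\bfB))$ in the appropriate continuous sense. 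That $\bfD(V)$ is \'etale then follows from the same rank-preservation statement, since $\vphi_q$-linearization commutes with taking $\sH_F$-invariants.

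Next, I would define the candidate quasi-inverse $\bfV(D) = (D \otimes_{\cE_F} \bfB)^{\vphi_q = 1}$, observe that $\sG_F$ acts $L$-linearly on it through the $\Gamma$-action on $D$ and the $\sG_F$-action on $\bfB$, and show that $\dim_L \bfV(D) = \operatorname{rank}_{\cE} D$ using $\bfB^{\vphi_q = 1} = F$ together with the \'etaleness of $D$. To conclude that $\bfD$ and $\bfV$ are quasi-inverse, I would prove that the two natural comparison maps
\[
    \bfD(V) \otimes_{\cE_F} \bfB \to V \otimes_F \bfB,
    \qquad
    \bfV(D) \otimes_F \bfB \to D \otimes_{\cE_F} \bfB
\]
are isomorphisms of $(\vphi_q, \sG_F)$-equivariant $\bfB$-modules; both reduce to the same rank comparison invoked above, and the adjunction isomorphisms $V \isomto \bfV(\bfD(V))$ and $D \isomto \bfD(\bfV(D))$ then follow by taking $\sH_F$-invariants on one side and $\vphi_q$-invariants on the other.

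The hard part will be the integral cohomology vanishing mentioned above, since this is where the specific construction of $\bfB$ as $\widehat{\cE}^\ur$ (the $\vpi$-adic completion of the maximal unramified extension of $\cE_F$ inside an ambient perfectoid field) and the Lubin--Tate field-of-norms identification of $\sH_F$ with an absolute Galois group of the imperfect residue field of $\cE_F$ enter in an essential way. Once that vanishing is granted, the remaining steps are formal manipulations with invariants, faithfully flat descent, and extension of scalars from $F$ to $L$.
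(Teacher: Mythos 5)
The paper does not prove this statement at all: it is quoted as background with citations to \cite{Fon1991} and \cite{KR2009}, and your outline reproduces essentially the argument of those references (dévissage to torsion $\cO_L$-lattices, Hilbert--90-type vanishing over $\cO_\bfB$ supplied by the Lubin--Tate field-of-norms identification of $\sH_F$, then formal comparison of the two base-change maps to descend the equivalence and identify the quasi-inverse $D \mapsto (D\otimes_{\cE_F}\bfB)^{\vphi_q=1}$). So your approach matches the cited proof; no discrepancy to report.
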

	
	Now we define a subfield $\cE^\dagger$ of $\cE$ and a ring $\cR$ containing $\cE^\dagger$.
	Let $v_p$ denote the $p$-adic additive valuation on $\Fb$ normalized as $v_p(p)=1$.
	For any formal series $f(T) = \sum_{n\in \Z} a_n T^n$ with coefficients in $L$ and any real number $r\geq 0$, we define $v^{\{r\}}(f) = \inf_n v_p(a_n) + nr$, which may be $\pm \infty$.
	For any $0\leq s \leq r$, we put $v^{[s,r]}(f) = \inf_{s\leq r' \leq r} v^{\{r'\}}(f)$.
	We define
	\begin{align*}
		\cE^{[s,r]} = \cE^{[s,r]} _L &= \left\{ f(T) = \sum_{n\in \Z} a_n T^n \ \middle| \ 
			\begin{array}{l}
				a_n \in L \mbox{ for any $n\in \Z$}, \\
				v^{[s,r]}(f) \neq -\infty
			\end{array} \right\}, \\
		\cE^{]0,r]} = \cE^{]0,r]}_L &= \bigcap_{0 < s \leq r} \cE^{[s,r]}, \\
		\cR = \cR_L &= \bigcup_{r>0} \cE^{]0,r]}, \\
		 \cE^{(0,r]} = \cE^{(0,r]}_L &= \cE^{]0,r]} \cap \cE, \mbox{ and} \\
		 \cE^\dagger = \cE_L^\dagger &= \bigcup_{r>0} \cE^{(0,r]}.
	\end{align*}
	They have ring structure and $\Gamma$ acts on them such that $\gamma.T = [\chi_\LT(\gamma)](T)$ for any $\gamma\in \Gamma$.
	Moreover, if $r$ is sufficiently small, then we have a ring endomorphism
	$\vphi_q\cln \cE^{[s,r]} \to \cE^{[sq,rq]}$ such that $\vphi_q(T) = f_\vpi(T)$.
	Hence the rings $\cE^\dagger$ and $\cR$ are endowed with a $(\vphi_q,\Gamma)$-action.
	We call $\cR$ \textit{the Robba ring} and $\cE^\dagger$ \textit{the bounded Robba ring}.
	We have inclusions $\cE \supset \cE^\dagger \subset \cR$ as rings with a $(\vphi_q,\Gamma)$-action.
	
	The ring $\cE^{[s,r]}$ is equipped with the topology defined by the valuation $v^{[s,r]}$.
	Then $\cE^{[s,r]}$ is complete.
	We endow $\cE^{]0,r]}$ with the projective limit topology, and $\cR$ with the inductive limit topology.
	On $\cE^{(0,r]}$, we consider the topology defined by the valuation $v^{\{r\}}$.
	We endow $\cE^\dagger$ with the inductive limit topology.
	
	The notion of \textit{$(\vphi_q,\Gamma)$-modules over $\cE^\dagger$} or $\cR$ is defined in a similar way as those over $\cO_\cE$ or $\cE$.
	A $(\vphi_q,\Gamma)$-modules $D^\dagger$ over $\cE^\dagger$ is called \textit{\'etale} if $D^\dagger\otimes_{\cE^\dagger}\cE$ is \'etale.
	A $(\vphi_q,\Gamma)$-modules $D$ over $\cR$ is called \textit{\'etale} or \textit{of slope 0} if there exists an \'etale $(\vphi_q,\Gamma)$-module $D^\dagger$ over $\cE^\dagger$ such that $D \simeq \cR \otimes_{\cE^\dagger} D^\dagger$.
	Let $\pGMod_{/\cE^\dagger}$ (resp. $\pGMod_{/\cR}$) denote the category of $(\vphi_q,\Gamma)$-modules over $\cE^\dagger$ (resp. $\cR$).
	We denote by $\pGModEt_{/\cE^\dagger}$ (resp. $\pGModEt_{/\cR}$) the full subcategory of $\pGMod_{/\cE^\dagger}$ (resp. $\pGMod_{/\cR}$) consisting of \'etale objects.
	
	\begin{thm}[{\cite[Prop.\ 1.6]{FX2013}}]
		The functor $D^\dagger \mapsto \cR\otimes_{\cE^\dagger} D^\dagger$ gives an equivalence of categories of $\pGModEt_{/\cE^\dagger}$ and $\pGModEt_{/\cR}$.
	\end{thm}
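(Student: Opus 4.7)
The plan is to verify essential surjectivity (which is tautological from the given definition) and full faithfulness (where the real content lies). For essential surjectivity: by the very definition of the word ``\'etale'' in $\pGModEt_{/\cR}$, every object $D \in \pGModEt_{/\cR}$ admits an isomorphism $D \simeq \cR \otimes_{\cE^\dagger} D^\dagger$ with $D^\dagger \in \pGModEt_{/\cE^\dagger}$, so the functor hits each object up to isomorphism. One should also check that it does carry étale objects to étale objects, but this is immediate.

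For full faithfulness, given $D^\dagger_1, D^\dagger_2 \in \pGModEt_{/\cE^\dagger}$, I would form the internal Hom $M^\dagger := (D^\dagger_1)^\vee \otimes_{\cE^\dagger} D^\dagger_2 \simeq \Hom_{\cE^\dagger}(D^\dagger_1, D^\dagger_2)$, which is again an \'etale $(\vphi_q,\Gamma)$-module over $\cE^\dagger$ (the \'etale subcategory is closed under duals and tensor products). Morphisms from $D^\dagger_1$ to $D^\dagger_2$ in $\pGMod_{/\cE^\dagger}$ correspond bijectively to elements of $(M^\dagger)^{\vphi_q = 1,\, \Gamma = 1}$, and morphisms from $\cR \otimes D^\dagger_1$ to $\cR \otimes D^\dagger_2$ in $\pGMod_{/\cR}$ correspond to $(\cR \otimes_{\cE^\dagger} M^\dagger)^{\vphi_q = 1,\, \Gamma = 1}$. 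Since the $\Gamma$-action commutes with $\vphi_q$ and preserves the inclusion $M^\dagger \immto \cR \otimes_{\cE^\dagger} M^\dagger$, it suffices to show that this inclusion induces a bijection
\[
(M^\dagger)^{\vphi_q = 1} \isomto (\cR \otimes_{\cE^\dagger} M^\dagger)^{\vphi_q = 1}.
\]
Injectivity follows from flatness of $\cR$ over $\cE^\dagger$, so the content is in surjectivity.

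The principal obstacle is exactly this surjectivity. To establish it, I would fix an $\cE^\dagger$-basis of $M^\dagger$ so that $\vphi_q$ acts by a matrix $A$; \'etaleness of $M^\dagger$ guarantees that, after choosing an appropriate integral model, both $A$ and $A^{-1}$ have bounded entries. A $\vphi_q$-fixed vector $x \in (\cR \otimes_{\cE^\dagger} M^\dagger)^{\vphi_q = 1}$ then satisfies
\[
x = A \cdot \vphi_q(x) = A \cdot \vphi_q(A) \cdots \vphi_q^{N-1}(A) \cdot \vphi_q^N(x)
\]
for every $N \geq 1$. The plan is to exploit the two congruences $f_\vpi(T) \equiv \vpi T \pmod{T^2}$ and $f_\vpi(T) \equiv T^q \pmod{\vpi}$ --- which control $\vphi_q$ near the origin and modulo $\vpi$ respectively --- to track the valuations $v^{\{r\}}$ of the iterates $\vphi_q^N(x)$ and combine them with the bounded size of the matrix product $A \cdot \vphi_q(A) \cdots \vphi_q^{N-1}(A)$. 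Letting $N \to \infty$, one forces the Laurent expansion of $x$ to have bounded coefficients, i.e.\ $x \in (\cE^\dagger)^n = M^\dagger$. This is the Lubin--Tate analogue of the classical Cherbonnier--Colmez contraction argument (and is closely related to the slope-$0$ case of Kedlaya's slope filtration theorem in the $(\vphi_q,\Gamma)$-setting); the chief technical point is verifying these convergence estimates uniformly on annuli for the Lubin--Tate Frobenius $\vphi_q(T) = f_\vpi(T)$ rather than the cyclotomic one. Once this is in hand, full faithfulness follows, and together with essential surjectivity it yields the claimed equivalence of categories.
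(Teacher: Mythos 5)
First, note that the paper does not prove this statement at all: it is imported verbatim from \cite[Prop.\ 1.6]{FX2013}, where it is deduced from Kedlaya's slope theory (unit-root/\'etale descent) adapted to the Lubin--Tate Frobenius. So there is no ``paper proof'' to match; measured against the actual argument in the literature, your outline has the right shape. With this paper's definition of \'etale over $\cR$, essential surjectivity is indeed tautological, and your reduction of full faithfulness, via the internal Hom $M^\dagger$, to the bijectivity of $(M^\dagger)^{\vphi_q=1}\to(\cR\otimes_{\cE^\dagger}M^\dagger)^{\vphi_q=1}$ is exactly where the content sits.

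However, your treatment of that crucial surjectivity has a genuine gap, and the mechanism you invoke is not the right one. You say \'etaleness guarantees that $A$ and $A^{-1}$ have \emph{bounded} entries; but every entry of a matrix over $\cE^\dagger$ is bounded, since $\cE^\dagger$ consists of bounded functions, so boundedness cannot be what makes the contraction close up --- and indeed the statement fails for non-\'etale modules whose Frobenius matrix and its inverse are bounded: in the cyclotomic case the rank-one module with $\vphi(e)=p^{-1}e$ has the unbounded fixed vector $t\,e$ with $t=\log(1+T)\in\cR\smin\cE^\dagger$. What the iteration $x=A\vphi_q(A)\cdots\vphi_q^{N-1}(A)\,\vphi_q^N(x)$ actually requires is \emph{integrality}: a basis in which $A$ and $A^{-1}$ lie in $M_d(\cO_\cE\cap\cE^\dagger)$, so that $v^{\{\rho\}}$ of each factor is $\geq -\varepsilon(\rho)$ with $\varepsilon(\rho)\to 0$ as $\rho\to 0$ and the product of $N\to\infty$ factors stays uniformly bounded while $\vphi_q^N$ pushes the annulus into a fixed window where the valuation of $x$ is bounded below. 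Producing such an integral model over the \emph{overconvergent} ring is itself nontrivial: the paper's definition of \'etale over $\cE^\dagger$ only provides an $\cO_\cE$-lattice after base change to $\cE$, and descending a unit-root basis to $\cE^\dagger$ is precisely the point where Kedlaya's theory (or a Cherbonnier--Colmez-type approximation argument as in \cite{CC1998}) is used; it cannot be waved through as ``choosing an appropriate integral model.'' A smaller slip: the estimate near the outer boundary is governed by $f_\vpi\equiv T^q \bmod \vpi$ alone; the congruence $f_\vpi\equiv\vpi T$ modulo degree $2$ controls the behaviour near $T=0$ and plays no role in the contraction. So the plan follows the correct known route, but the two steps you defer --- the integral overconvergent model and the uniform valuation estimates --- are exactly the theorem's content, and the justification you do offer (boundedness of $A$, $A^{-1}$) would not suffice.
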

	
	We have a subfield $\bfB^\dagger$ of $\bfB$ as in \cite[1B]{FX2013},
	which contains $\cE_F^\dagger$ and has a ($\vphi_q,\sG_F$)-action.
	Moreover, we have $(\bfB^\dagger)^{\sH_F} = \cE_F^\dagger$.
	For any object $V$ of $\Rep_L (\sG_F)$, we put $\bfD^\dagger(V) = (V\otimes_F \bfB^\dagger)^{\sH_F}$.
	We have a natural inclusion $\bfD^\dagger(V) \subset \bfD(V)$.
	
	\begin{defn}
		An object $V$ of $\Rep_L (\sG_F)$ is \textit{overconvergent} if $\bfD^\dagger(V)$ generates $\bfD(V)$ as an $\cE$-vector space.
		We denote by $\Rep_L^{\oc}(\sG_F)$ the subcategory of $\Rep_L(\sG_F)$ consisting of overconvergent objects.
	\end{defn}
	
	\begin{thm}[{\cite[Prop.\ 1.5]{FX2013}}]
		The operation $\bfD^\dagger$ gives an equivalence of categories of $\Rep_L^\oc(\sG_F)$ and $\pGModEt_{\cE^\dagger}$.
		Moreover, the diagram
		\[
		\xymatrix{
			\Rep_L^{\oc}(\sG_F) \ar[r]^{\bfD^\dagger} \ar@{^(->}[d] & \pGModEt_{/\cE^\dagger} \ar[d]^{\cE \otimes_{\cE^\dagger} (\cdot)} \\
			\Rep_L(\sG_F) \ar^{\bfD}[r] & \pGModEt_{/\cE}
		}\]
		commutes up to canonical isomorphisms.
	\end{thm}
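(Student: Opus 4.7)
My approach is to bootstrap from the Fontaine--Kisin--Ren equivalence $\bfD\colon \Rep_L(\sG_F) \isomto \pGModEt_{/\cE}$ by exhibiting $\bfD^\dagger$ as a refinement that becomes $\bfD$ after tensoring with $\cE$. The inclusion $\bfB^\dagger \subset \bfB$ induces, for every $V\in \Rep_L(\sG_F)$, a natural $(\vphi_q,\Gamma)$-equivariant injection $\bfD^\dagger(V) \hookrightarrow \bfD(V)$, and linearization yields a comparison map
\[
\alpha_V \colon \cE \otimes_{\cE^\dagger} \bfD^\dagger(V) \longrightarrow \bfD(V).
\]
Since $\cE^\dagger$ and $\cE$ are fields, $\alpha_V$ is automatically injective. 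By the definition of overconvergence, $\alpha_V$ is surjective precisely when $V\in \Rep_L^\oc(\sG_F)$. Thus for overconvergent $V$, the map $\alpha_V$ is an isomorphism; this gives commutativity of the stated square and shows that $\bfD^\dagger(V)$ is finite free over $\cE^\dagger$ of rank $\dim_L V$, with étale $\cE$-scalar extension, so $\bfD^\dagger(V)\in \pGModEt_{/\cE^\dagger}$.

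Full faithfulness of $\bfD^\dagger$ reduces to that of $\bfD$. Any $\sG_F$-equivariant map $f\colon V\to W$ sends $V\otimes_F \bfB^\dagger$ into $W\otimes_F \bfB^\dagger$, so taking $\sH_F$-invariants it maps $\bfD^\dagger(V)$ into $\bfD^\dagger(W)$. Combined with the injection $\Hom_{\pGMod_{/\cE^\dagger}}(\bfD^\dagger(V),\bfD^\dagger(W)) \hookrightarrow \Hom_{\pGMod_{/\cE}}(\bfD(V),\bfD(W))$ induced by the already-established isomorphism $\alpha$, this produces a natural bijection between $\sG_F$-equivariant maps $V\to W$ and $(\vphi_q,\Gamma)$-equivariant maps $\bfD^\dagger(V)\to \bfD^\dagger(W)$.

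For essential surjectivity, start with an étale $D^\dagger \in \pGModEt_{/\cE^\dagger}$. By definition $D := \cE \otimes_{\cE^\dagger} D^\dagger$ is étale over $\cE$, so Fontaine--Kisin--Ren produces $V = (D \otimes_{\cE_F} \bfB)^{\vphi_q = 1}\in \Rep_L(\sG_F)$ with a canonical identification $\bfD(V) \isomto D$. To finish, I must show $D^\dagger = \bfD^\dagger(V)$ inside $\bfD(V) = D$, for this simultaneously exhibits $\alpha_V$ as surjective (so $V$ is overconvergent) and places $D^\dagger$ in the essential image of $\bfD^\dagger$. This equality is where the main work lies: one must produce an étale $\cE^\dagger$-lattice in $V\otimes_F \bfB^\dagger$ matching $D^\dagger$. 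The plan is to choose an $\cO_F$-integral basis of $D^\dagger$ whose $\vphi_q$-matrix lies in $\GL$ over a slope-zero integral subring of $\cE^\dagger$, transport it through $D\otimes_{\cE_F}\bfB \cong V\otimes_F \bfB$, and invoke the key identity $(\bfB^\dagger)^{\sH_F} = \cE_F^\dagger$ together with the slope-zero hypothesis to force the transported basis vectors to lie in $V\otimes_F \bfB^\dagger$. A rank count over the field $\cE^\dagger$ then upgrades the inclusion $D^\dagger \subset \bfD^\dagger(V)$ to equality.

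The main obstacle is precisely this descent from $\bfB$ to $\bfB^\dagger$, i.e.\ proving that every étale $D^\dagger$ actually arises from an overconvergent representation. This is the Lubin-Tate analogue of the Cherbonnier--Colmez overconvergence theorem on the level of already-overconvergent modules, and its validity relies on the fact that the slope-zero condition is preserved under the passage between $\cE^\dagger$ and $\bfB^\dagger$, i.e.\ on the fundamental relation $(\bfB^\dagger)^{\sH_F} = \cE_F^\dagger$ recalled in the text.
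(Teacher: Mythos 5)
This statement is not proved in the paper at all: it is quoted verbatim from \cite[Prop.\ 1.5]{FX2013}, so the only fair comparison is against what a complete proof must contain, and on that score your write-up has a genuine gap. The parts you actually argue (commutativity of the square, full faithfulness) are essentially formal consequences of $\bfB^\dagger\subset\bfB$, $(\bfB^\dagger)^{\sH_F}=\cE_F^\dagger$ and the Fontaine--Kisin--Ren equivalence, and are fine in outline, although even there the injectivity of $\alpha_V$ is not ``automatic because $\cE^\dagger$ and $\cE$ are fields'': one needs the standard linear-independence-of-invariants argument (a minimal-length relation argument over $\bfB^\dagger$, using that $\bfB^\dagger$ is a field with $(\bfB^\dagger)^{\sH_F}=\cE_F^\dagger$) to see that $\cE^\dagger$-independent elements of $\bfD^\dagger(V)$ stay independent over $\cE$, and hence that $\dim_{\cE^\dagger}\bfD^\dagger(V)\leq\dim_L V$.

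The real issue is essential surjectivity, which you explicitly leave as a ``plan.'' Given an \'etale $D^\dagger$ over $\cE^\dagger$, setting $D=\cE\otimes_{\cE^\dagger}D^\dagger$ and $V=(D\otimes_{\cE_F}\bfB)^{\vphi_q=1}$, the crux is to show $V\subset D^\dagger\otimes_{\cE^\dagger}\bfB^\dagger$, i.e.\ that any solution $x$ of $\vphi_q(x)=Ax$ with $x$ having coordinates in $\bfB$ and $A\in\GL_d$ over the bounded integral ring $\cO_\cE\cap\cE^\dagger$ automatically has coordinates in $\bfB^\dagger$. This is a quantitative overconvergence-of-solutions lemma (the Lubin--Tate analogue of the corresponding step in Cherbonnier--Colmez, and the content of the relevant part of \cite[Prop.\ 1.5]{FX2013}); it does \emph{not} follow from the Galois-descent identity $(\bfB^\dagger)^{\sH_F}=\cE_F^\dagger$, which only controls $\sH_F$-invariants of coefficients and says nothing about boundedness of $\vphi_q$-fixed vectors. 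Your sketch leans entirely on that identity plus an unproven assertion that $D^\dagger$ admits a basis whose Frobenius matrix is invertible over $\cO_\cE\cap\cE^\dagger$ (this too needs an argument, since ``\'etale'' is defined only after base change to $\cE$, and one must match the $\cO_\cE$-lattice with $D^\dagger$). As it stands, the hardest step of the theorem is named but not proved, so the proposal is incomplete; it should either carry out the boundedness estimate or cite it explicitly.
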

	
	\begin{rmk}
		Cherbonnier and Colmez \cite{CC1998} have shown that, in the cyclotomic case, we have $\Rep_L^\oc (\sG_F) = \Rep_L (\sG_F)$, i.e. all of the $L$-representations of $\sG_F$ are overconvergent with respect to the cyclotomic extension.
		Even in the general Lubin-Tate case, all of the 1-dimensional $L$-representations of $\sG_F$ are overconvergent \cite[Remark\ 1.8]{FX2013}.
		However, there exist $L$-representations of $\sG_F$ which are not overconvergent,
		as shown by Fourquaux and Xie \cite[Theroem\ 0.6]{FX2013}.
	\end{rmk}
	
	Now, let us recall another property of $L$-representations of $\sG_F$ so-called $F$-analyticity.
	We write $\C_p$ for the $p$-adic completion of $\Fb$.
	\begin{defn}
		An object $V$ of $\Rep_L(\sG_F)$ is called $F$-analytic if, for any $\tau\in \Hom_{\Q_p\mathrm{\mathchar`-alg}}(F,\Fb)\smin \{\id_F\}$, the $\C_p$-representation $\C_p\otimes_{\tau, F} V$ is trivial.
	\end{defn}
	
	For $F$-analytic representations, Berger shows the following:
	
	\begin{thm}[{\cite[Thm.\ C]{Berger2016}}]
		Any $F$-analytic $L$-representation of $\sG_F$ is overconvergent.
	\end{thm}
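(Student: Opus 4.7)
The plan follows Berger's strategy via locally analytic vectors in a ``perfected'' period ring. The rough idea is that the tautological $(\vphi_q,\Gamma)$-module associated to any $V$ lives naturally over a perfected Robba ring $\widetilde{\bfB}^\dagger_\rig$, and overconvergence is equivalent to this module descending to the honest Robba ring $\cR$; the hypothesis of $F$-analyticity should supply exactly the rigidity needed to force such a descent.

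First I would introduce the perfected Robba ring $\widetilde{\bfB}^\dagger_\rig$ carrying compatible $\sG_F$- and $\vphi_q$-actions, so that for any $V\in\Rep_L(\sG_F)$,
\[
\widetilde{\bfD}^\dagger_\rig(V) := \bigl(\widetilde{\bfB}^\dagger_\rig \otimes_{\Q_p} V\bigr)^{\sH_F}
\]
is a free $(\vphi_q,\Gamma)$-module over $\widetilde{\bfB}^\dagger_{\rig,F} := (\widetilde{\bfB}^\dagger_\rig)^{\sH_F}$ of rank $\dim_L V$, equipped with a canonical embedding $\cR = \cR_L \hookrightarrow \widetilde{\bfB}^\dagger_{\rig,F}$. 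Since $\Gamma$ is a $p$-adic Lie group (an open subgroup of $\cO_F^\times$), one may extract the subspace of $\Gamma$-locally analytic vectors in any Fr\'echet representation of $\Gamma$; applied to $\widetilde{\bfD}^\dagger_\rig(V)$, this yields a large $\Gamma$-stable subspace $\widetilde{\bfD}^{\dagger,\mathrm{la}}_\rig(V)$.

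The crucial step is to exploit the $F$-analyticity of $V$. The derived Lie-algebra action produces, for each embedding $\tau\colon F\hookrightarrow\Qb_p$, a differential operator $\nabla_\tau$ on locally analytic vectors, arising from the decomposition $\Lie(\Gamma)\otimes_{\Q_p}\Qb_p \cong \bigoplus_\tau \Qb_p$. The hypothesis of $F$-analyticity is essentially equivalent to the vanishing of $\nabla_\tau$ on $V$ for every $\tau\neq\id_F$. I would then invoke the key period-ring input that the $F$-pro-analytic vectors in $\widetilde{\bfB}^\dagger_{\rig,F}$---namely, those locally analytic vectors annihilated by $\nabla_\tau$ for all $\tau\neq\id_F$---are precisely $\cR_F$ in the Lubin--Tate variable $T$. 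Consequently the $F$-analytic locus
\[
\bigl(\widetilde{\bfD}^{\dagger,\mathrm{la}}_\rig(V)\bigr)^{\nabla_\tau=0,\ \tau\neq\id_F}
\]
acquires a natural structure of $(\vphi_q,\Gamma)$-module over $\cR_F$.

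Finally I would verify that this $\cR_F$-module has full rank $\dim_L V$ and, after base change back up to $\widetilde{\bfB}^\dagger_{\rig,F}$, recovers $\widetilde{\bfD}^\dagger_\rig(V)$. A standard slope/\'etaleness argument then shows that it arises from an \'etale $(\vphi_q,\Gamma)$-module over $\cE^\dagger$ via the equivalence recalled just before the statement, and this module must coincide with $\bfD^\dagger(V)$; hence $\bfD^\dagger(V)$ generates $\bfD(V)$ over $\cE$, which is precisely overconvergence. The hard part will be the $F$-analytic descent: identifying the $F$-analytic vectors inside $\widetilde{\bfB}^\dagger_{\rig,F}$ with $\cR_F$ compatibly with $\vphi_q$, and correctly transferring the $F$-analyticity of $V$ to the period-ring incarnation. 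This step rests on delicate Tate--Sen type computations and an analysis of locally analytic cohomology for $\Gamma$, and it is the real technical heart of Berger's argument.
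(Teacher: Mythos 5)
Note first that the paper does not prove this statement at all: it is imported verbatim from Berger (Thm.\ C of \cite{Berger2016}), so there is no internal argument to compare yours against. Judged on its own, your sketch correctly reproduces the architecture of Berger's proof---locally analytic vectors in the perfected ring $\widetilde{\bfB}^{\dagger}_{\rig}$, the decomposition of $\Lie\Gamma$ into the operators $\nabla_\tau$ indexed by embeddings $\tau\cln F\immto \Fb$, descent of the $F$-analytic locus to a module over the one-variable Robba ring, and finally slope theory to land in $\pGModEt_{/\cE^\dagger}$ and hence conclude that $\bfD^\dagger(V)$ spans $\bfD(V)$.

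However, as a proof it has genuine gaps, and one identification is stated incorrectly. First, the $\nabla_\tau$-killed (pro-)analytic vectors of $\widetilde{\bfB}^{\dagger}_{\rig,F}$ are not $\cR_F$ on the nose but the strictly larger ring $\bigcup_{n\geq 0}\vphi_q^{-n}(\cR_F)$; your descent therefore first lands there, and an extra Frobenius-regularization step (using the $\vphi_q$-structure of the module) is needed to come back down to $\cR_F$ before \'etale descent to $\cE^\dagger$ can even be discussed. Second, the two pillars of the argument---(i) the computation of the locally analytic and $F$-analytic vectors of the perfected period rings, and (ii) the fact that $\widetilde{\bfD}^{\dagger}_{\rig}(V)$ is generated over $\widetilde{\bfB}^{\dagger}_{\rig,F}$ by its pro-analytic vectors, which rests on Tate--Sen/Berger--Colmez descent and vanishing results for higher locally analytic vectors---are exactly the hard content of Berger's paper and are only named, not established, in your proposal; likewise the translation of $F$-analyticity of $V$ (triviality of $\C_p\otimes_{\tau,F}V$ for $\tau\neq\id_F$, i.e.\ a Sen-operator condition) into the vanishing of $\nabla_\tau$ on the period-ring incarnation requires an argument you do not supply. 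So your text is an accurate roadmap of the cited proof, but not a proof: every step you flag as ``the technical heart'' is precisely what remains to be done.
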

	
	There is also a notion of $F$-analyticity for $(\vphi_q,\Gamma)$-modules over $\cR$.
	To define it, we introduce an action of a Lie algebra.
	Let $\Lie\Gamma$ denote the Lie algebra associated to the $p$-adic Lie group $\Gamma$.
	Note that there is an isomorphism $\Lie\Gamma \isomto \cO_F$ induced by $\chi_\LT$.
	Let $D$ be any object in $\pGMod_{/\cR}$.
	We will define an action of $\Lie\Gamma$ on $D$.
	Take any $x\in D$.
	If $\beta \in \Lie\Gamma$ is sufficiently close to 0, then we can define $\gamma = \exp\beta \in \Gamma$.
	Now we recall the following:
	\begin{lem}[{\cite[Lemma 1.7]{FX2013}}]
		For any $r>0$ and $0<s \leq r$, there exists $n=n(s,t)$ such that,
		for any $\gamma' \in \Gamma_n$ and $f\in \cE^{[s,r]}$, we have $v^{[s,r]}((\gamma'-1)f) \geq v^{[s,r]}(f) + 2$.
	\end{lem}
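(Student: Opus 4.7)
The plan is to reduce the estimate to the single case $f = T$ and then bootstrap to all of $\cE^{[s,r]}$ using that $\gamma'$ is a continuous ring automorphism. For $\gamma' \in \Gamma_n$, write $u = \chi_\LT(\gamma') \in 1 + \vpi^n\cO_F$, so that $\gamma'.T = [u](T)$. The first step is to show that for $n$ large enough (depending on $s,r$), one has $v^{[s,r]}([u](T) - T) \geq v^{[s,r]}(T) + 2 = s + 2$. A clean way is to invoke the Lubin-Tate logarithm $\log_\LT$, which converges on the open unit disk and satisfies $\log_\LT([u](T)) = u\log_\LT(T)$; writing
\[
[u](T) - T \;=\; \exp_\LT\bigl(u\log_\LT(T)\bigr) - \exp_\LT\bigl(\log_\LT(T)\bigr)
\]
and using $u - 1 \in \vpi^n\cO_F$, the difference lies in $\vpi^n \cdot T \cdot \cO_F[\![T]\!]$ once the denominators in $\log_\LT$ and $\exp_\LT$ are controlled on the annulus $[s,r]$. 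Choosing $n$ large enough that $n\,v_p(\vpi) - C(s,r) \geq 2$, where $C(s,r)$ absorbs the analytic contributions, yields the desired bound for $T$.

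The second step extends the estimate to arbitrary $f$. For polynomials $T^k$ with $k \geq 1$, apply the telescoping identity
\[
(\gamma'-1)T^k \;=\; \bigl([u](T) - T\bigr)\sum_{j=0}^{k-1}[u](T)^{j}T^{k-1-j},
\]
combined with the submultiplicativity of $v^{[s,r]}$ and the fact that $v^{[s,r]}([u](T)) = v^{[s,r]}(T)$ (since $[u](T) = uT + O(T^2)$ with $u$ a unit). For negative powers $T^{-k}$, a parallel argument starting from $(\gamma'-1)T^{-1} = T^{-1}[u](T)^{-1}(T - [u](T))$ works. Finally, Laurent polynomials are dense in $\cE^{[s,r]}$ for the $v^{[s,r]}$-topology and the operator $\gamma' - 1$ is continuous, so the inequality passes to the limit on all of $\cE^{[s,r]}$.

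The main obstacle is the first step: making the basic estimate for $(\gamma'-1)T$ quantitative enough that $n$ can be chosen uniformly in terms of $s$ and $r$. One must track the $p$-adic valuations of the coefficients of $\log_\LT$ and $\exp_\LT$ (their denominators grow, but in a way bounded by a logarithmic function of the degree) and verify that the Gauss norms $v^{\{r'\}}$ for $r' \in [s,r]$ are preserved under composition with power series in $T\cO_F[\![T]\!]$. Once this technical input is in hand, the remainder is essentially formal, since both $\gamma'$ and $\gamma'-1$ are continuous on $\cE^{[s,r]}$ and the Leibniz-type identities reduce the general case to the estimate on $T$.
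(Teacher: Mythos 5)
The telescoping identities and the passage to all of $\cE^{[s,r]}$ by density are sound, and since $\gamma'$ acts as an isometry for each $v^{\{r'\}}$, the reduction to the generator $T$ is legitimate once the base estimate is in hand. The genuine gap is in that base estimate. Your literal claim that $[u](T)-T\in\vpi^n T\cO_F[\![T]\!]$ is false: already in the cyclotomic case with $u=1+p$, one has $[u](T)-T=((1+T)^p-1)(1+T)$, whose coefficient on $T^p$ is a unit. More seriously, the route through $\exp_\LT$ and $\log_\LT$ does not converge on the whole annulus: $\exp_\LT$ converges only where $v_p>\tfrac{1}{e(q-1)}$, whereas $v^{\{r'\}}(\log_\LT T)$ drops below this threshold and in fact tends to $-\infty$ as $r'\to 0^+$. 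So for $s<\tfrac{1}{e(q-1)}$ the rewriting $[u](T)=\exp_\LT(u\log_\LT T)$ is not an identity of series convergent on $[s,r]$, and the termwise estimate of $\sum_k a_k(u^k-1)(\log_\LT T)^k$ (with $a_k$ the coefficients of $\exp_\LT$) gives $-\infty$; this cannot be absorbed into a constant $C(s,r)$, because the lemma must hold for arbitrarily small $s$.

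The repair is to avoid $\exp_\LT$ altogether: write $[u](T)=\cG_\vpi\bigl(T,[u-1](T)\bigr)$ via the formal group law, whose coefficients lie in $\cO_F$, and observe that $v^{\{r'\}}([\vpi](S))\geq\min\bigl(v_p(\vpi)+v^{\{r'\}}(S),\,q\,v^{\{r'\}}(S)\bigr)$; iterating this shows that $v^{\{r'\}}([\vpi^n a](T))-r'$ grows without bound in $n$, uniformly over $r'\in[s,r]$, which yields the base estimate with no convergence issue. Alternatively one may conjugate by $\vphi_q$ to pass to an annulus with $s$ bounded away from $0$, where your analytic estimates do converge. Note also that the paper itself gives no proof of this lemma: it cites \cite[Lemma 1.7]{FX2013} and only remarks that the constant $1$ there may be replaced by $2$.
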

	Note that, in the original version, the right hand side is $v^{[s,r]}(f) + 1$.
	However, by the proof, we can replace 1 with any other positive real number, for example 2.

	We fix a basis $e_1,\ldots,e_d$ of $D$.
	For any $r> 0$, we put
	$
	D^{]0,r]} = \oplus_{i=1}^d \cE^{]0,r]} e_i.
	$
	It depends on the choice of $e_1,\ldots,e_d$.
	However, for another choice $e_1',\ldots,e_d'$, there exists $0< r' < r$ such that $\oplus_{i=1}^d \cE^{]0,r']} e_i = \oplus_{i=1}^d \cE^{]0,r']} e_i'.$
	We put $D^{[s,r]} = \cE^{[s,r]}\otimes_{\cE^{]0,r]}}D^{]0,r]}.$
	\begin{cor}\label{valuationLemmaGeneralizedVersion}
		For any $r>0$ and $0<s \leq r$, there exists $n=n(s,t)$ such that,
		for any $\gamma' \in \Gamma_n$ and $x\in D^{[s,r]}$, we have $v^{[s,r]}((\gamma' -1)x) \geq v^{[s,r]}(x) + 2$.
	\end{cor}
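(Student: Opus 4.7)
The plan is to reduce the vector-valued estimate to the scalar estimate of the preceding lemma by exploiting semilinearity and continuity of the $\Gamma$-action. I will define the valuation on $D^{[s,r]}$ via the chosen basis, by $v^{[s,r]}\bigl(\sum_i f_i e_i\bigr) = \min_i v^{[s,r]}(f_i)$, which makes $D^{[s,r]}$ a $p$-adic Banach space on which $v^{[s,r]}$ is ultrametric and submultiplicative against scalars in $\cE^{[s,r]}$.

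Given $x = \sum_i f_i e_i \in D^{[s,r]}$ with $f_i \in \cE^{[s,r]}$, semilinearity yields the decomposition
\[
(\gamma' - 1)(x) = \sum_{i=1}^d \bigl(\gamma'(f_i) - f_i\bigr)\,\gamma'(e_i) + \sum_{i=1}^d f_i\,\bigl(\gamma'(e_i) - e_i\bigr).
\]
Applying the scalar lemma to each coefficient produces $n_1 = n_1(s,r)$ such that $v^{[s,r]}\bigl((\gamma'-1)f_i\bigr) \ge v^{[s,r]}(f_i) + 2$ for every $\gamma' \in \Gamma_{n_1}$ and every $i$. Writing $\gamma'(e_i) = \sum_j a_{ji}(\gamma')\,e_j$, continuity of the $\Gamma$-action on $D^{[s,r]}$ provides $n_2 = n_2(s,r)$ such that $v^{[s,r]}\bigl(a_{ji}(\gamma') - \delta_{ji}\bigr) \ge 2$ for all $i,j$ and all $\gamma' \in \Gamma_{n_2}$. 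This gives in particular $v^{[s,r]}(\gamma'(e_i)) \ge 0$ and $v^{[s,r]}(\gamma'(e_i) - e_i) \ge 2$. Setting $n = \max(n_1, n_2)$, submultiplicativity bounds each summand of the first sum by $v^{[s,r]}(f_i) + 2$, and each summand of the second also by $v^{[s,r]}(f_i) + 2$; the ultrametric inequality then gives $v^{[s,r]}((\gamma'-1)x) \ge \min_i v^{[s,r]}(f_i) + 2 = v^{[s,r]}(x) + 2$.

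The only real obstacle is the continuity assertion on the matrix coefficients $a_{ji}(\gamma')$. To justify it, I would note that since $D$ is a $(\vphi_q,\Gamma)$-module over $\cR$ the $\Gamma$-action is continuous by definition, while $a_{ji}(1) = \delta_{ji}$. Combined with the fact that $\gamma'$ preserves $\cE^{]0,r]}$ (equivalently, $D^{]0,r]}$) once $r$ is small enough for the basis, the $a_{ji}(\gamma')$ depend continuously on $\gamma'$ in the Banach topology of $\cE^{[s,r]}$, and hence can be made arbitrarily close to $\delta_{ji}$ by restricting $\gamma'$ to a sufficiently deep $\Gamma_{n}$. One might also have to shrink $r$ slightly at the start to ensure the basis $\{e_i\}$ lies in a common $\Gamma$-stable $D^{]0,r]}$, but the statement is insensitive to such a reduction.
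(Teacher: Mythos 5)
Your proof is correct and is precisely the argument the paper invokes by deferring to \cite[Lem.\ 5.2]{Berger2002}: use semilinearity to split $(\gamma'-1)x$ into a term governed by the scalar estimate on $(\gamma'-1)f_i$ and a term governed by continuity of $\gamma'\mapsto\gamma'(e_i)$ in the $v^{[s,r]}$-topology, then conclude by the ultrametric and submultiplicative properties of $v^{[s,r]}$. Your caveats about shrinking $r$ so that the basis spans a $\Gamma$-stable $D^{]0,r]}$, and about where the continuity of the matrix coefficients comes from, are exactly the points one must address when filling in Berger's lemma.
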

	\begin{proof}
		Since we can prove it in the same way as \cite[Lem.\ 5.2]{Berger2002}, we omit it.
	\end{proof}
	We choose an integer $m\geq 0$ such that $\gamma^{p^m} \in \Gamma_{n(s,t)}$.
	By Corollary \ref{valuationLemmaGeneralizedVersion}, the series
	\[
		(\log \gamma)_{[s,r]} (x)  = \frac{1}{p^m}\sum_{i=1}^\infty (-1)^{i-1}\frac{(\gamma^{p^m} - 1)^i}{i}x
	\]
	converges in $D^{[s,r]}$, and it is independent of the choice of $m$.
	We can show that, if $s' < s$, then the image of $(\log\gamma)_{[s',r]}(x)$ via the natural map $D^{[s',r]} \to D^{[s,r]}$ coincides with $(\log\gamma)_{[s,r]}(x)$.
	Thus we obtain a projective system $((\log \gamma)_{[s,r]})_{0<s\leq r}$,
	which gives an element $(\log\gamma) x \in D^{]0,r]}$.
	We put $d\Gamma_\beta x = (\log \gamma)x$.
	For a general $\beta \in \Lie\Gamma$, we choose a sufficiently large $n \in \Z$ and put $d\Gamma_\beta x = p^{-n}d\Gamma_{p^n\beta}x$,
	which is independent of the choice of $n$.
	As a result, we have an action $d\Gamma_\bullet \cln \Lie\Gamma \to \End_L(D)$.

	For an element $\beta \in \Lie\Gamma$ sufficiently close to 1, $\nabla_\beta|_{D}$ denotes the operator $(\log\chi_\LT(\exp\beta))^{-1}d\Gamma_\beta$ on $D$.
	
	\begin{defn}
		An object $D$ of $\pGMod_{/\cR}$ is called \textit{$F$-analytic} if the operator $\nabla_\beta|_D$
		is independent of the choice of $\beta \in \Lie\Gamma$.
		If so, we often denote $\nabla_\beta|_D$ by $\nabla|_D$, or $\nabla$ if no confusion occurs.
	\end{defn}
	
	We have a formula to calculate $\log\gamma$ as follows:
	
	\begin{lem}\label{calculationFormulaOfLog}
		For any $x\in D$ and an element $\gamma\in \Gamma$ sufficiently close to 1, we have
		\[
		(\log \gamma) x = \lim_{n\to \infty} \frac{\gamma^{p^n}x - x}{p^n}.
		\]
	\end{lem}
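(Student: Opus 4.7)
The plan is to expand $\gamma^{p^n}$ using the binomial theorem and to compare the resulting finite sum term by term with the defining power series for $\log\gamma$. Fix auxiliary parameters $0 < s \le r$ and work throughout in $D^{[s,r]}$. Since for any $\gamma\in\Gamma$ we have $\gamma^{p^m}\in\Gamma_{n(s,r)}$ for all sufficiently large $m$, and since $(\log\gamma)_{[s,r]}$ is defined precisely by rescaling $(\log\gamma^{p^m})_{[s,r]}$ by $1/p^m$ for such an $m$, we may reduce to the case $\gamma\in\Gamma_{n(s,r)}$. Iterating Corollary~\ref{valuationLemmaGeneralizedVersion} then yields the basic inequality
\[
v^{[s,r]}\bigl((\gamma-1)^j x\bigr) \;\ge\; v^{[s,r]}(x) + 2j \qquad (j \ge 0),
\]
which ensures the convergence in $D^{[s,r]}$ of $(\log\gamma)x = \sum_{j\ge 1}(-1)^{j-1}(\gamma-1)^j x/j$ to the element defined in the text.

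From the identity $\gamma^{p^n}-1 = \sum_{j=1}^{p^n}\binom{p^n}{j}(\gamma-1)^j$, the $j=1$ contributions on both sides cancel, giving
\[
\frac{\gamma^{p^n}x - x}{p^n} - (\log\gamma)x \;=\; \sum_{j\ge 2} c_{n,j}\,(\gamma-1)^j x,
\]
where $c_{n,j} = \tfrac{1}{p^n}\binom{p^n}{j} - \tfrac{(-1)^{j-1}}{j}$ for $2\le j\le p^n$ and $c_{n,j} = -\tfrac{(-1)^{j-1}}{j}$ for $j > p^n$. Expanding $(p^n-1)(p^n-2)\cdots(p^n-j+1)$ as a polynomial in $p^n$ with integer coefficients whose constant term is exactly $(-1)^{j-1}(j-1)!$, one reads off $v_p(c_{n,j}) \ge n - v_p(j!)$ in the first range and $v_p(c_{n,j}) = -v_p(j)$ in the second.

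Combining these coefficient bounds with the $+2j$ estimate and with the elementary inequalities $v_p(j!) \le (j-1)/(p-1)$ and $v_p(j) \le \log_p j$, a short calculation yields the termwise lower bound $v^{[s,r]}\bigl(c_{n,j}(\gamma-1)^j x\bigr) \ge v^{[s,r]}(x) + n + 2$ in both regimes. The non-Archimedean property of $v^{[s,r]}$ propagates the same bound to the entire sum, which therefore tends to $\infty$ as $n\to\infty$. This proves the claim in $D^{[s,r]}$, and passage to the projective limit over $s$ extends the convergence to $D^{]0,r]}$ and hence to $D$.

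The main technical point to watch is the tail range $j > p^n$: there the coefficient $c_{n,j}$ enjoys no $p$-adic cancellation, so the $n$-dependent growth must come entirely from the geometric gain $+2j$ supplied by Corollary~\ref{valuationLemmaGeneralizedVersion}. It is precisely the slack $2 - 1/(p-1) \ge 1$ that allows the low range and high range of $j$ to contribute, simultaneously and uniformly, a valuation bound tending to infinity with $n$.
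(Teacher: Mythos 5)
Your proof is correct, but it takes a genuinely different route from the paper's. The paper's argument exploits the operator identity $\gamma = \exp(\log\gamma)$ on $D^{[s,r]}$, which is available because $v^{[s,r]}(\log\gamma)\geq 2 > 1/(p-1)$ once $\gamma$ is close enough to $1$. Writing $\gamma^{p^n} = \exp(p^n\log\gamma)$ and expanding the exponential series isolates $(\log\gamma)x$ as the linear term and collects the entire remainder into a single correction $p^n\sum_{i\geq 0}p^{ni}(\log\gamma)^{i+2}x/(i+2)!$, whose valuation tends to infinity essentially from the visible prefactor $p^n$. You instead expand $\gamma^{p^n}-1 = \sum_j\binom{p^n}{j}(\gamma-1)^j$ via the binomial theorem and compare coefficient by coefficient with the defining series $\log\gamma = \sum_j(-1)^{j-1}(\gamma-1)^j/j$; this forces a case split at $j=p^n$ (where the binomial coefficient vanishes) and requires the auxiliary $p$-adic estimates $v_p(j!)\leq (j-1)/(p-1)$ and $v_p(j)\leq\log_p j$, together with the observation that $(p^n-1)\cdots(p^n-j+1)\equiv(-1)^{j-1}(j-1)!\pmod{p^n}$. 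Both routes close by invoking the $+2j$ gain from Corollary~\ref{valuationLemmaGeneralizedVersion}. The paper's version is shorter and avoids the tail-range analysis at the cost of first setting up the operator exponential and its inverse relationship with $\log$; yours stays entirely at the level of elementary series manipulation but is correspondingly more computational. One small expository note: the slack $2 - 1/(p-1)$ you highlight is what makes the low range $2\le j\le p^n$ work, whereas in the tail range $j>p^n$ the relevant comparison is simply $2j$ against $\log_p j$; it is worth separating those two observations if you write this up.
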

	
	\begin{proof}
		It suffices to show the formula for $D^{[s,r]}$.
		We fix a basis $e_1,\ldots,e_d$ of $D$.
		For any $y=\sum_{i=1}^{d} y_i e_i \in D^{[s,r]} = \oplus_{i=1}^d \cE^{[s,t]}e_i$, we define $v^{[s,r]}(y) = \inf_{1\leq i \leq d} v^{[s,r]}(y_i)$.
		Let $\cL$ be an $L$-linear operator on $D^{[s,r]}$.
		We define a valuation of $\cL$ by
		\[
			v^{[s,r]}(\cL) = \inf_{y\in D^{[s,r]}} (v^{[s,r]}(\cL y) - v^{[s,r]}(y)).
		\]
		If $v^{[s,r]}(\cL) > (p-1)^{-1}$, then the operator
		\[
		\exp \cL = \sum_{i=0}^{\infty} \frac{\cL^i}{i!}
		\]
		on $D^{[s,r]}$ is well-defined.
		
		By corollary \ref{valuationLemmaGeneralizedVersion}, if an element $\gamma\in \Gamma$ is sufficiently close to 1,
		then the operator $\log \gamma$ on $D^{[s,r]}$ is well-defined and 
		$v^{[s,r]}(\log \gamma) \geq 2$.
		Thus we can define $\exp(\log\gamma)$.
		Moreover, we have
		\[
		\gamma x = [\exp(\log \gamma)](x).
		\]
		Hence, for any integer $n\geq 1$, we have
		\begin{align}
		\gamma^{p^n} x &= [\exp(p^n \log \gamma)](x) \\
		&= \sum_{i=0}^{\infty} \frac{(p^n \log \gamma)^i}{i!} x, \quad \mbox{and} \\ 
		\frac{\gamma^{p^n}x - x}{p^n} &= (\log \gamma)(x) + p^n \sum_{i=0}^{\infty} \frac{p^{ni}(\log\gamma)^{i+2}}{(i+2)!} x.  \label{gamma_pow_p_to_the_n} 
		\end{align}
		Note that the second term of the right-hand side of \eqref{gamma_pow_p_to_the_n} is well-defined since $v^{[s,r]} \left(p^{ni}(\log\gamma)^{i+2}x/(i+2)! \right)$ tends to 0 as $i\to \infty$.
		As $n$ goes to $\infty$ in \eqref{gamma_pow_p_to_the_n}, we have the required equality.
	\end{proof}
	
	Berger shows that $F$-analyticity of an $L$-representation of $\sG_F$ is equivalent to that of the corresponding $(\vphi_q,\Gamma)$-module.
	\begin{thm}[{\cite[Thm.\ D]{Berger2016}}]\label{equivBetTwoAnalyticity}
		Let $V$ be any object of $\Rep_L^\oc(\sG_F)$.
		Then $V$ is $F$-analytic if and only if $\cR\otimes_{\cE^\dagger} \bfD^\dagger(V)$ is $F$-analytic.
	\end{thm}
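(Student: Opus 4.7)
The plan is to reduce both $F$-analyticity conditions to a matching statement about Sen operators and then invoke an intertwining of the $\Lie\Gamma$-action under localization. After possibly enlarging $L$ (which does not affect either $F$-analyticity property), I may assume $L$ contains the Galois closure of $F$, so that $F\otimes_{\Q_p} L \isomto \prod_{\tau\in \Hom_{\Q_p}(F,L)} L$. Via $\chi_\LT$ this induces a decomposition $\Lie\Gamma \otimes_{\Q_p} L \isomto \prod_\tau L_\tau$, under which a sufficiently small $\beta \in \Lie\Gamma$ has coordinates $(\tau(\log\chi_\LT(\exp\beta)))_\tau$. The first move is to reinterpret both sides in terms of these $\tau$-components.

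First, on the $(\vphi_q,\Gamma)$-side, I would show that the map $\beta \mapsto \nabla_\beta|_D$ extends $L$-linearly to $\Lie\Gamma\otimes_{\Q_p} L$ and thereby decomposes as $\nabla_\beta = \sum_\tau c_\tau(\beta)\,\nabla^{(\tau)}|_D$, where $\nabla^{(\tau)}|_D \in \End_L(D)$ and $c_\tau(\beta)$ is the $\tau$-coordinate of $\beta$. Here one uses that $d\Gamma_\bullet$ is already $\Q_p$-linear and that $\Gamma$ acts continuously, so $d\Gamma$ upgrades to an action of the enveloping algebra; dividing by $\log\chi_\LT(\exp\beta)$ and collecting $\tau$-components gives the decomposition. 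Granted this, $F$-analyticity of $D$ is equivalent to $\nabla^{(\tau)}|_D = 0$ for all $\tau \neq \id_F$.

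Second, on the Galois side, I would use the Sen-style module attached to $V$. Because $V$ is overconvergent, one can descend from $\bfD(V)$ to $\bfD^\dagger(V)$ and then localize; the classical Sen construction (extended to the Lubin-Tate setting) associates to $V$ a finite free module over $\widehat{F_\infty}\otimes_{\Q_p} L$ on which $\Lie\Gamma\otimes_{\Q_p} L$ acts via a Sen-type operator $\Theta$. Decomposing $\Theta = \sum_\tau \Theta^{(\tau)}$ along $\prod_\tau L_\tau$, the condition that $\C_p\otimes_{\tau,F} V$ be trivial as a $\sG_F$-representation translates (via the Sen isomorphism classifying $\C_p$-semilinear $\sG_F$-representations by their Sen operator) into $\Theta^{(\tau)} = 0$. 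Thus $V$ is $F$-analytic iff $\Theta^{(\tau)} = 0$ for all $\tau \neq \id_F$.

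The heart of the proof, and the main obstacle, is the third step: matching $\nabla^{(\tau)}|_D$ with $\Theta^{(\tau)}$. For this I would use the localization maps $\iota_n\cln D^{]0,r_n]} \to \widehat{F_n}\otimes_F D_{\text{Sen}}(V)$ obtained by evaluating at the torsion point $[\vpi^n](T)=0$, which for $n$ large makes sense on elements of $\cE^{]0,r_n]}$ and is $\Gamma_n$-equivariant. Using Corollary \ref{valuationLemmaGeneralizedVersion} and Lemma \ref{calculationFormulaOfLog}, the operator $\log\gamma$ computed on $D^{[s,r]}$ via the partial sums $p^{-n}(\gamma^{p^n}-1)$ commutes with $\iota_n$ and lands on the Sen-side derivation $p^{-n}(\gamma^{p^n}-1)$ whose limit defines $\Theta$. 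Taking the $\tau$-component gives $\iota_n\circ \nabla^{(\tau)}|_D = \Theta^{(\tau)}\circ \iota_n$. Since the collective image of the $\iota_n$ generates the Sen module and $D$ is determined by its localizations up to $\vphi_q$-compatibility, the vanishing of $\nabla^{(\tau)}|_D$ and of $\Theta^{(\tau)}$ are equivalent, finishing the proof. The technical difficulty in this step lies in the rigorous identification of the localization of $D = \cR\otimes_{\cE^\dagger} \bfD^\dagger(V)$ with the Sen module of $V$ and in checking continuity so that the series definitions of $\nabla$ and $\Theta$ correspond under $\iota_n$; overconvergence of $V$ is essential here, since without it $\bfD^\dagger(V)$ is too small for localization to see all of $V$.
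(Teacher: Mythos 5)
You are trying to prove a statement that the paper itself does not prove: it is quoted as Theorem D of Berger's 2016 Duke paper and used as a black box, so there is no in-paper argument to compare against. The relevant benchmark is Berger's own proof, which is built on his theory of locally analytic vectors in the big rings (e.g.\ $\widetilde{\mathbf{B}}^{\dagger}_{\mathrm{rig}}$) together with a Lubin--Tate version of Sen/dif-type descent; your sketch has the right general shape (decompose the $\Lie\Gamma$-action along the embeddings $\tau\cln F\to \Fb$ after extending scalars, and compare the $\tau$-components on the module side and on the Galois side via localization maps $\iota_n$), but it is a roadmap rather than a proof.

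The gap is concentrated exactly where you flag it. (i) The identification of the localizations of $D=\cR\otimes_{\cE^\dagger}\bfD^\dagger(V)$ with a Sen-type module of $V$ attached to the Lubin--Tate extension, and the compatibility of the series defining $\nabla$ and $\Theta$ under $\iota_n$, is the essential technical content of the theorem; you assert it and defer it, but it is precisely what Berger's machinery is needed for, so nothing has been proved. (ii) Even granting $\iota_n\circ\nabla^{(\tau)}|_D=\Theta^{(\tau)}\circ\iota_n$, your final equivalence is asymmetric: surjectivity of the localizations gives $\nabla^{(\tau)}|_D=0\Rightarrow\Theta^{(\tau)}=0$, but the converse requires an injectivity/density argument --- for instance that the $\cR$-linear, $\vphi_q$- and $\Gamma$-equivariant operator $\nabla^{(\tau)}|_D$ (for $\tau\neq\id_F$) must vanish if it is killed by every $\iota_n$ --- and ``$D$ is determined by its localizations up to $\vphi_q$-compatibility'' is not such an argument. (iii) The translation ``$\C_p\otimes_{\tau,F}V$ trivial $\iff\Theta^{(\tau)}=0$'' also needs justification: the classical Sen operator is attached to the cyclotomic extension, whereas your $\Theta$ is the $[F:\Q_p]$-dimensional $\Lie\Gamma$-action attached to the Lubin--Tate extension, and comparing its $\tau$-component with the Sen operator of the semilinear representation $\C_p\otimes_{\tau,F}V$ is itself a nontrivial (though known) statement that must be quoted or established. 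In short, the proposal outlines a plausible strategy but leaves the decisive steps to exactly the results it is meant to prove.
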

	
	\section{Proof of the main theorem}
	In this section, we introduce several lemmas first.
	Next, we prove Theorem \ref{trueMainThm}, which is a $(\vphi_q,\Gamma)$-module version of the main theorem.
	Then we obtain Theorem \ref{MainThm} as a corollary.
	
	Let $D_1$ and $D_2$ be $(\vphi_q, \Gamma)$-modules over $\cR$.
	We denote by $\Ext(D_1,D_2)$ the set of isomorphism classes of extensions of $D_1$ by $D_2$ in the category of $(\vphi_q, \Gamma)$-modules over $\cR$, which has a natural $L$-vector space structure.
	\begin{lem}\label{nonTrivExt}
		Let $\Delta$ and $D$ be $(\vphi_q, \Gamma)$-modules over $\cR$.
		Suppose that $D$ is $F$-analytic and \[\End_{\pGmod}(\Delta) = L.\]
		If $\Ext(\Delta, D)\neq 0$, then $\Delta$ is $F$-analytic.
	\end{lem}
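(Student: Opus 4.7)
The plan is to show that, for any non-split extension $0\to D\to E\to \Delta\to 0$ in $\pGmod$ and any $\beta,\beta'\in\Lie\Gamma$ sufficiently close to $0$, the discrepancy $N_{\beta,\beta'}=\nabla_\beta|_E-\nabla_{\beta'}|_E$ is a morphism of $(\vphi_q,\Gamma)$-modules that vanishes on $D$. It will therefore descend to an element of $\Hom_{\pGmod}(\Delta,E)$, and its further composition with $E\twoheadrightarrow\Delta$ will live in $\End_{\pGmod}(\Delta)=L$. Combined with non-splitness, this scalar will be forced to vanish, giving $\nabla_\beta|_\Delta=\nabla_{\beta'}|_\Delta$ and hence $F$-analyticity of $\Delta$.

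Concretely, I would first fix a non-split extension, which exists since $\Ext(\Delta,D)\neq 0$. Next, using Lemma \ref{calculationFormulaOfLog}, the identity $\gamma(rx)=\gamma(r)\gamma(x)$ for $r\in\cR$ and $x\in E$ passes to the limit and yields the Leibniz rule $d\Gamma_\beta(rx)=d\Gamma_\beta(r)\cdot x+r\cdot d\Gamma_\beta(x)$. A direct computation with $\gamma T=[\chi_\LT(\gamma)](T)$, passing to the formal logarithm of $\cG_\vpi$, shows that $\nabla_\beta$ acts on $\cR$ by a single $\beta$-independent derivation. Together these imply that $N_{\beta,\beta'}$ is $\cR$-linear. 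Moreover $d\Gamma_\beta$ commutes with $\vphi_q$ (because $\Gamma$ and $\vphi_q$ commute) and with $\Gamma$ (which is abelian), so $N_{\beta,\beta'}$ is $(\vphi_q,\Gamma)$-equivariant. Since $D$ is $F$-analytic, $N_{\beta,\beta'}|_D=0$, and $N_{\beta,\beta'}$ factors as $E\twoheadrightarrow\Delta\xrightarrow{g_{\beta,\beta'}}E$ with $g_{\beta,\beta'}\in\Hom_{\pGmod}(\Delta,E)$.

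Composing $g_{\beta,\beta'}$ with the quotient $E\twoheadrightarrow\Delta$ yields $\nabla_\beta|_\Delta-\nabla_{\beta'}|_\Delta\in\End_{\pGmod}(\Delta)=L$, i.e., a scalar $\lambda\in L$. If $\lambda\neq 0$, then $\lambda^{-1}g_{\beta,\beta'}$ is a $(\vphi_q,\Gamma)$-equivariant $\cR$-linear section of $E\twoheadrightarrow\Delta$, splitting the extension and contradicting its non-splitness. Hence $\lambda=0$ for every admissible pair $\beta,\beta'$, which is exactly the $F$-analyticity of $\Delta$.

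The principal technical obstacle is the verification that the limit-defined operator $d\Gamma_\beta$ genuinely satisfies the Leibniz rule and commutes correctly with $\vphi_q$ and $\Gamma$, and in particular that $\cR$ itself is $F$-analytic, so that the $\nabla_\beta r-\nabla_{\beta'} r$ term arising in the Leibniz rule vanishes and $N_{\beta,\beta'}$ is truly $\cR$-linear. Both of these rest on careful convergence estimates via Lemma \ref{calculationFormulaOfLog} and Corollary \ref{valuationLemmaGeneralizedVersion}.
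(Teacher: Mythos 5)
Your argument is correct and follows essentially the same route as the paper: form the discrepancy operator $\nabla_\beta-\nabla_{\beta'}$ on an extension $E$, observe that it is an $\cR$-linear $(\vphi_q,\Gamma)$-equivariant endomorphism killing $D$, hence descends to a map $\Delta\to E$ whose projection back to $\Delta$ is a scalar $\lambda\in L$ by $\End_{\pGmod}(\Delta)=L$, and conclude that $\lambda\neq 0$ would produce a splitting. The only cosmetic difference is that the paper argues contrapositively (assume $\Delta$ not $F$-analytic and show every extension splits), while you fix a non-split extension and conclude $\lambda=0$ for all pairs $\beta,\beta'$; you are also a bit more careful about dividing by $\lambda$ to get the actual section, a normalization the paper leaves implicit.
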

	\begin{proof}
		This lemma is a generalization of \cite[Theorem 5.20]{FX2013}.
		Suppose that $\Delta$ is not $F$-analytic.
		Then there exist $\beta,\ \beta' \in \Lie\Gamma$ such that
		$\nabla_\beta|_{\Delta} \neq \nabla_{\beta'}|_{\Delta}$.
		We put $\nabla'|_{\Delta} = \nabla_\beta|_{\Delta} - \nabla_{\beta'}|_{\Delta}$.
		Since $\nabla_\beta|_{\Delta}$ and $\nabla_{\beta'}|_{\Delta}$ are both $\cR$-derivations and the trivial $(\vphi_q, \Gamma)$-module $\cR$ is $F$-analytic, the operator $\nabla'|_{\Delta}$ is $\cR$-linear.
		Moreover, it is stable under $(\vphi_q,\Gamma)$-action.
		Hence we have $\nabla'|_{\Delta} \in \End_{\pGmod}(\Delta)$, and by assumption, there exists $c\in L^\times$ such that
		$\nabla'|_{\Delta} = c \cdot \id_\Delta.$ 
		Now we choose any extension $0 \to D \to \widetilde{D} \to \Delta \to 0$,
		and consider the operator $\nabla'|_{\widetilde{D}} = \nabla_\beta|_{\widetilde{D}} - \nabla_{\beta'}|_{\widetilde{D}}$,
		which is also $\cR$-linear and $(\vphi_q,\Gamma)$-stable.
		Since $D$ is $F$-analytic, we have $D \subset \Ker (\nabla'|_{\widetilde{D}})$ and $\nabla'|_{\widetilde{D}}$ induces an $\cR$-linear and $(\vphi_q,\Gamma)$-stable homomorphism $\Delta \to \widetilde{D}$, which is a section of $\widetilde{D} \to \Delta.$
		Therefore we have $\End(\Delta,D) = 0$ as conclusion.
	\end{proof}	
	
	Let $\Delta$ and $D$ be $(\vphi_q, \Gamma)$-modules over $\cR$.
	Then $\Hom_{\Rmod}(\Delta, D)$ has a canonical $(\vphi_q, \Gamma)$-module structure, and we have a natural isomorphism
	\begin{equation}
		\Ext(\cR, \Hom_{\Rmod}(\Delta,D)) \isomto \Ext(\Delta, D). \label{isomBetweenTwoExts}
	\end{equation}
	
	Here, we describe the isomorphism \eqref{isomBetweenTwoExts} explicitly.
	Take any extension \[ 0 \to \Hom_{\Rmod}(\Delta, D) \to \widetilde{\cH} \to \cR \to 0\]
	and choose a lift $\widetilde{h} \in \cH$ of $1\in \cR$.
	Then both $(\vphi_q-1) \widetilde{h}$ and $\ (\gamma - 1) \widetilde{h}$ are in $\Hom_{\Rmod}(\Delta, D)$.
	We put $\widetilde{D} = D \oplus \Delta $ as an $\cR$-module,
	and define a  $(\vphi_q,\Gamma)$-action on $\widetilde{D}$ as follows:
	on $D \subset \widetilde{D}$, we use the original $(\vphi_q,\Gamma)$-action.
	For any $x\in \Delta$ and $\gamma \in \Gamma$, we define
	\begin{align*}
		\vphi_q |_{\widetilde{D}} (x) &= \vphi_q|_{\Delta} (x) + [(\vphi_q-1)\widetilde{h}](x), \\
		\gamma|_{\widetilde{D}} (x) &= \gamma|_{\Delta}(x) +  [(\gamma-1)\widetilde{h}](x).
	\end{align*}
	Here, for two $\cR$-modules $V_1 \subset V_2$, an element $x\in V_1$ and an operator $T$ which acts on both $V_1$ and $V_2$, the notation $T|_{V_i}(x)$ means the image of $x$ by $T$ regarding $x$ as an element in $V_i$.
	Since the isomorphism class $[\widetilde{D}]$ of the $(\vphi_q,\Gamma)$-module $\widetilde{D}$ is independent of the choice of $\widetilde{\cH}$ and $\widetilde{h}$, we have a map
	\[ \Ext(\cR, \Hom_{\Rmod}(\Delta,D)) \to \Ext(\Delta, D) \cln [\widetilde{\cH}] \mapsto [\widetilde{D}], \]
	which is the isomorphism (\ref{isomBetweenTwoExts}).
	
	Now we present the inverse map of \eqref{isomBetweenTwoExts}.
	Take any extension $0 \to D \to \widetilde{D} \to \Delta \to 0$ and choose a section $s\cln \Delta \to \widetilde{D}$ as an $\cR$-module.
	Note that $s$ is an element of the $(\vphi_q,\Gamma)$-module $\Hom_{\Rmod}(\Delta,\widetilde{D})$.
	Then we have 
	\[
		(\vphi_q-1)s,\ (\gamma-1)s \in \Hom_{\Rmod}(\Delta,D)
	\]
	for any $\gamma \in \Gamma$.
	We put $\widetilde{\cH} = \Hom_{\Rmod}(\Delta,D) \oplus \cR \widetilde{h}$ as an $\cR$-module, where $\cR \widetilde{h}$ is a free $\cR$-module of rank 1 of which $\widetilde{h}$ gives a basis.
	We define a $(\vphi_q,\Gamma)$-action on $\widetilde{\cH}$ as follows:
	on $\Hom_{\Rmod}(\Delta,D) \subset \widetilde{\cH}$, we use the original $(\vphi_q,\Gamma)$-action.
	We define
	\begin{align*}
		\vphi_q \widetilde{h} & = \widetilde{h} + (\vphi_q - 1) s, \\
		\gamma \widetilde{h} &= \widetilde{h} + (\gamma - 1)s \quad \mbox{for any $\gamma \in \Gamma$}.
	\end{align*}
	Since the isomorphism class $[\widetilde{\cH}]$ is independent of the choice of $\widetilde{\cH}$ and $s$, 
	we obtain a map
	\[
		\Ext(\Delta, D) \to \Ext(\cR, \Hom_{\Rmod}(\Delta,D)) \cln [\widetilde{D}] \mapsto [\widetilde{\cH}],
	\]
	which gives the inverse of (\ref{isomBetweenTwoExts}).
	
	For $F$-analytic $(\vphi_q, \Gamma)$-modules $D_1$ and $D_2$ over $\cR$,
	$\Ext_{\an}(D_1,D_2)$ denotes the $L$-subspace of $\Ext(D_1,D_2)$ consisting of $F$-analytic extensions.
	
	\begin{lem}\label{ExtLemma}
		Let $\Delta$ and $D$ be $F$-analytic $(\vphi_q, \Gamma)$-modules over $\cR$.
		Then the image of $\Ext_{\an}(\cR, \Hom_{\Rmod}(\Delta,D))$ by the isomorphism (\ref{isomBetweenTwoExts}) is $\Ext_{\an}(\Delta, D)$.
	\end{lem}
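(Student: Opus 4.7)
The plan is to characterize $F$-analyticity of each kind of extension by a single obstruction class living in $\Hom_{\pGmod}(\Delta, D)$, and then show that these two classes correspond under the isomorphism \eqref{isomBetweenTwoExts}.

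First I would note that since $\Delta$ and $D$ are $F$-analytic, so is $\Hom_{\Rmod}(\Delta, D)$ with its natural $(\vphi_q, \Gamma)$-module structure, because $\nabla_\beta$ acts on a Hom space by $(\nabla_\beta h)(x) = \nabla_\beta|_D h(x) - h(\nabla_\beta|_\Delta x)$, which is independent of $\beta$ under the hypothesis. Next, for any $(\vphi_q, \Gamma)$-module $M$ over $\cR$, the difference operator $\Theta_{\beta,\beta'} := \nabla_\beta|_M - \nabla_{\beta'}|_M$ is $\cR$-linear and commutes with $\vphi_q$ and the $\Gamma$-action on $M$, by the same argument already used in the proof of Lemma~\ref{nonTrivExt}.

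Given an extension $\widetilde{\cH}$ of $\cR$ by $\Hom_{\Rmod}(\Delta, D)$ with lift $\widetilde{h}$ of $1$, the operator $\Theta_{\beta,\beta'}|_{\widetilde{\cH}}$ vanishes on $\Hom_{\Rmod}(\Delta, D)$ (which is $F$-analytic) and induces the zero map on the quotient $\cR$ (also $F$-analytic), so it factors through an $\cR$-linear, $(\vphi_q,\Gamma)$-equivariant map $\cR \to \Hom_{\Rmod}(\Delta, D)$ determined by the element $\Theta_{\beta,\beta'}(\widetilde{h}) \in \Hom_{\pGmod}(\Delta, D)$. Hence $\widetilde{\cH}$ is $F$-analytic iff $\Theta_{\beta,\beta'}(\widetilde h) = 0$ for all $\beta, \beta'$. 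An entirely analogous argument, applied to an extension $\widetilde{D}$ of $\Delta$ by $D$ with an $\cR$-linear section $s$, shows that $\Theta_{\beta,\beta'}|_{\widetilde{D}}$ descends to an element $\bar\Theta \in \Hom_{\pGmod}(\Delta, D)$ whose vanishing for all $\beta, \beta'$ is equivalent to the $F$-analyticity of $\widetilde{D}$.

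The remaining step, which I expect to be the main technical obstacle, is to verify that these two obstructions coincide when $\widetilde{D}$ is constructed from $(\widetilde{\cH}, \widetilde{h})$ as in the text --- that is, $\bar\Theta(x) = [\Theta_{\beta,\beta'}(\widetilde h)](x)$ for all $x \in \Delta$. For this I would apply Lemma~\ref{calculationFormulaOfLog} inside $\widetilde{D}$ together with the cocycle identity $(\gamma^n-1)\widetilde{h} = \sum_{k=0}^{n-1}\gamma^k(\gamma-1)\widetilde{h}$, and a second application of Lemma~\ref{calculationFormulaOfLog} inside $\widetilde{\cH}$ to identify $\lim_n (\gamma^{p^n}-1)\widetilde{h}/p^n$ with $d\Gamma_\beta|_{\widetilde{\cH}}(\widetilde h)$ as an element of $\Hom_{\Rmod}(\Delta, D) \subset \widetilde{\cH}$, for $\gamma = \exp\beta$. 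This yields
\[
 d\Gamma_\beta|_{\widetilde D}(s(x)) = \bigl([d\Gamma_\beta|_{\widetilde{\cH}}(\widetilde{h})](x),\ d\Gamma_\beta|_\Delta(x)\bigr) \in D \oplus \Delta,
\]
after which dividing by $\log\chi_\LT(\exp\beta)$, subtracting the analogous expression for $\beta'$, and using $F$-analyticity of $\Delta$ to cancel the $\Delta$-component gives the desired identity $\bar\Theta(x) = [\Theta_{\beta,\beta'}(\widetilde h)](x)$. Once this is established, both obstructions vanish simultaneously and the two $L$-subspaces are shown to correspond.
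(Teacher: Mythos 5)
Your proposal is correct and, underneath the bookkeeping, is essentially the same argument as the paper's. Both rely on the same single computation: applying Lemma~\ref{calculationFormulaOfLog} to the extension (twice, once in $\widetilde{D}$ and once in $\widetilde{\cH}$) to obtain the identity $d\Gamma_\beta|_{\widetilde D}(s(x)) = d\Gamma_\beta|_\Delta(x) + [d\Gamma_\beta|_{\widetilde{\cH}}(\widetilde h)](x)$ and its normalized form $\nabla|_{\widetilde D}(s(x)) = \nabla|_\Delta(x) + [\nabla\widetilde h](x)$. The paper reads this identity directly in both directions: independence of $\gamma$ on the right gives $F$-analyticity of $\widetilde D$, and conversely $F$-analyticity of $\widetilde D$ combined with that of $\Hom_{\Rmod}(\Delta,\widetilde D)$ forces $\nabla\widetilde h$ to be well-defined. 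You instead package each side's $F$-analyticity into the vanishing of a single obstruction class $\Theta_{\beta,\beta'}(\widetilde h)$, resp.\ $\bar\Theta$, in $\Hom_{\pGmod}(\Delta,D)$, and then show the two classes coincide --- which is exactly the same identity rearranged. The obstruction-class phrasing is slightly more structural and makes the ``iff'' visibly symmetric, but it buys nothing essentially new here; the hard content (the two applications of Lemma~\ref{calculationFormulaOfLog} and the observation that $\Theta_{\beta,\beta'}$ is $\cR$-linear and equivariant) is identical in both.

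Two very minor points to tighten in a full write-up: (i) when you pass from $\lim_n(\gamma^{p^n}-1)\widetilde h/p^n$ evaluated at $x$ to $[d\Gamma_\beta|_{\widetilde{\cH}}(\widetilde h)](x)$, you should note that evaluation at $x\in\Delta$ is continuous in the relevant topology, which is what lets you interchange the limit with evaluation; (ii) your section $s$ should be taken to be the canonical one $x\mapsto(0,x)$ in $\widetilde D = D\oplus\Delta$ coming from the construction of the isomorphism \eqref{isomBetweenTwoExts}, so that $(\gamma-1)s = (\gamma-1)\widetilde h$; otherwise you change $\widetilde h$ by an element of $\Hom_{\Rmod}(\Delta,D)$ and have to check the obstruction class is unchanged (it is, because $\Theta_{\beta,\beta'}$ kills $\Hom_{\Rmod}(\Delta,D)$, but this deserves a word).
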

	
	\begin{proof}
		Take any extension $0 \to \Hom_{\Rmod}(\Delta, D) \to \widetilde{\cH} \to \cR \to 0$, and suppose that, by the isomorphism (\ref{isomBetweenTwoExts}), the class $[\widetilde{\cH}]$ maps to $[\widetilde{D}] \in \Ext(\Delta, D)$.
		First, we assume that $\widetilde{\cH}$ is $F$-analytic.
		Then we must show that $\Delta$ as a $\Gamma$-submodule of $\widetilde{D}$ is $F$-analytic.
		We choose a lift $\widetilde{h}$ of $1\in\cR$ in $\widetilde{\cH}$ as above.
		Take any $\gamma\in\Gamma$ sufficiently close to 1.
		By Lemma \ref{calculationFormulaOfLog}, the element $(\log\gamma) \widetilde{h}$ is indeed in $\Hom_{\Rmod}(\Delta,D)$.
		By using Lemma \ref{calculationFormulaOfLog} again, for any $x\in \Delta$, we have
		\begin{align*}
			(\log \gamma)|_{\widetilde{D}} (x) &= (\log \gamma)|_{\Delta}(x) + [(\log\gamma)(\widetilde{h})] (x) \ \ \mbox{and} \\
			\left. \frac{\log \gamma}{\log\chi_\LT (\gamma)} \right|_{\widetilde{D}} (x)
				&=  \nabla|_{\Delta}(x) + [\nabla\widetilde{h}](x).
		\end{align*}
		Therefore the operator $\log\gamma/(\log\chi_\LT(\gamma))$ on $\widetilde{D}$ is independent of the choice of $\gamma$ and $\widetilde{D}$ is $F$-analytic.
		
		Conversely, we will show that the $F$-analyticity of $\widetilde{D}$ yields that of $\widetilde{\cH}$.
		Take any section $s\cln \Delta \to \widetilde{D}$ of the projection $\widetilde{D} \to \Delta$ as an $\cR$-module.
		Since both $\Delta$ and $D$ are $F$-analytic, the $(\vphi_q, \Gamma)$-module $\Hom_{\Rmod}(\Delta, D)$ is $F$-analytic.
		Thus $\nabla$ on $\Hom_{\Rmod}(\Delta, D)$ is well-defined.
		By Lemma \ref{calculationFormulaOfLog}, we have
		\[
			(\log \gamma)\widetilde{h} = (\log \gamma)s \ \ \mbox{and }\ 
			\frac{\log\gamma}{\log\chi_\LT(\gamma)} \widetilde{h} = \nabla s.
		\]
		Therefore $[\log\gamma/(\log\chi_\LT(\gamma))]\widetilde{h}$ is independent of the choice of $\gamma$ and $\widetilde{\cH}$ is $F$-analytic.
	\end{proof}
	
	By Lemma \ref{ExtLemma} and \cite[Cor.\ 4.4]{FX2013}, we have the following:
	\begin{cor}\label{ExtAndExtan}
		Let $\Delta$ and $D$ be $F$-analytic $(\vphi_q, \Gamma)$-modules over $\cR$.
		Then $\Ext_{\an}(\Delta, D)$ is of codimension
		\[
			[F:\Q_p] \dim_L \Hom_{\pGmod}(\Delta, D)
		\]
		in $\Ext(\Delta,D)$.
		In particular, if $\Hom_{\pGmod}(\Delta, D) = 0$, then we have
		\[
			\Ext_{\an}(\Delta, D) = \Ext(\Delta,D).
		\]
	\end{cor}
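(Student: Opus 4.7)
The plan is simply to plug the isomorphism of Lemma \ref{ExtLemma} into the $F$-analytic cohomology computation of \cite[Cor.\ 4.4]{FX2013}. Set $\cH = \Hom_{\Rmod}(\Delta, D)$ equipped with its natural $(\vphi_q,\Gamma)$-module structure. The first thing I would check is that $\cH$ is itself $F$-analytic: on $\cH$ the operator $\nabla_\beta$ is given by the commutator $f \mapsto \nabla_\beta \circ f - f \circ \nabla_\beta$, and since both $\Delta$ and $D$ are $F$-analytic the two $\nabla_\beta$ on the right are independent of $\beta$, hence so is the commutator. This makes $\Ext_{\an}(\cR, \cH)$ well-defined.

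Next I would use Lemma \ref{ExtLemma}: the natural isomorphism \eqref{isomBetweenTwoExts} identifies $\Ext_{\an}(\cR, \cH) \subset \Ext(\cR, \cH)$ with $\Ext_{\an}(\Delta, D) \subset \Ext(\Delta, D)$. Consequently the codimension we want to compute equals
\[
\dim_L \Ext(\cR, \cH) / \Ext_{\an}(\cR, \cH).
\]
At this point I would invoke \cite[Cor.\ 4.4]{FX2013} applied to the $F$-analytic $(\vphi_q,\Gamma)$-module $\cH$, which expresses precisely this codimension as $[F:\Q_p]\cdot \dim_L \cH^{\vphi_q=1,\Gamma}$. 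Finally, one observes
\[
\cH^{\vphi_q=1,\Gamma} = \Hom_{\Rmod}(\Delta, D)^{\vphi_q=1,\Gamma} = \Hom_{\pGmod}(\Delta, D),
\]
which yields the stated formula. The ``in particular'' clause then follows immediately: if $\Hom_{\pGmod}(\Delta, D) = 0$ the codimension is zero, forcing $\Ext_{\an}(\Delta, D) = \Ext(\Delta, D)$.

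There is no real obstacle: all of the heavy lifting has already been packaged in Lemma \ref{ExtLemma} and \cite[Cor.\ 4.4]{FX2013}. The only things to verify carefully are the two bookkeeping facts used above, namely that $\cH$ inherits $F$-analyticity from $\Delta$ and $D$, and the tautological identification $\Hom_{\Rmod}(\Delta, D)^{\vphi_q=1,\Gamma} = \Hom_{\pGmod}(\Delta, D)$; both are routine.
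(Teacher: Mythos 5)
Your proposal is correct and follows exactly the route the paper intends: the paper's entire proof is the sentence ``By Lemma \ref{ExtLemma} and \cite[Cor.\ 4.4]{FX2013}, we have the following,'' and you have simply unpacked it, adding the (correct and routine) checks that $\Hom_{\Rmod}(\Delta,D)$ inherits $F$-analyticity via the commutator formula for $\nabla_\beta$ and that $\Hom_{\Rmod}(\Delta,D)^{\vphi_q=1,\Gamma}=\Hom_{\pGmod}(\Delta,D)$.
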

	
	Now we state and prove the main theorem.
	For a $(\vphi_q, \Gamma)$-module $D$ over $\cR$, a finite extension $L'\subset \Fb$ of $L$ and a character $\delta\cln F^\times \to (L')^\times$,
	we denote by $D(\delta)$ the $(\vphi_q, \Gamma)$-module over $\cR_{L'}$ whose underlying $\cR_{L'}$-module is $L'\otimes_{L}D$ and
	whose $(\vphi_q, \Gamma)$-action is defined by
	\begin{align*}
		\vphi_q|_{D(\delta)} (x) &= \delta(\vpi) (\id_{L'}\otimes \vphi_q|_D) (x) \mbox{\qquad and} \\
		\gamma |_{D(\delta)} (x) &= \delta(\chi_\LT(\gamma)) (\id_{L'}\otimes \gamma_q|_D) (x)
	\end{align*}
	for any $x\in D(\delta)$ and any $\gamma\in \Gamma$.
	\begin{thm}\label{trueMainThm}
		Let $D$ be a $(\vphi_q, \Gamma)$-module over $\cR$.
		Suppose that $D$ has a filtration
		\[
			0 = D_0 \subset D_1 \subset \cdots \subset D_r = D
		\]
		of $(\vphi_q, \Gamma)$-modules over $\cR$.
		We put $\Delta_i = D_i/D_{i-1}$.
		We assume that the following conditions hold.
		
		\begin{enumerate}
			\renewcommand{\theenumi}{{\rm (\alph{enumi})}}
			\item For any $1 \leq i \leq r,$ we have $\End_{\pGmod} (\Delta_i) = L$.
			\item For any $1 \leq i<j \leq r$, we have $\Hom_{\pGmod} (\Delta_j,\Delta_i) = 0$.
			\item For any $1 < i \leq r,$ the short exact sequence 
			$ 0 \to D_{i-1} \to D_i \to \Delta_i \to 0$ does not split.
		\end{enumerate}
		Then there exist a finite extension $L'$ of $L$ and a character $\delta\cln F^\times \to (L')^\times$ such that,
		for any $0 \leq i \leq r,$ the $(\vphi_q, \Gamma)$-module $D_i(\delta)$ is $F$-analytic.
	\end{thm}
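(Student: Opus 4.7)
The plan is to induct on $r$, taking Lemma \ref{nonTrivExt} and Corollary \ref{ExtAndExtan} as the main engines of the inductive step and reducing the theorem to the base case $r = 1$.

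For the inductive step, assume a character $\delta$ has been found such that $D_1(\delta), \ldots, D_{r-1}(\delta)$ are all $F$-analytic. Consider the short exact sequence
\[
0 \to D_{r-1}(\delta) \to D_r(\delta) \to \Delta_r(\delta) \to 0,
\]
which remains non-split by (c). Twisting by a character preserves endomorphism rings, so $\End_{\pGmod}(\Delta_r(\delta)) = L'$ by (a); together with the $F$-analyticity of $D_{r-1}(\delta)$ and non-splitness of the extension, Lemma \ref{nonTrivExt} forces $\Delta_r(\delta)$ to be $F$-analytic. Next, I apply (b) together with devissage: iterating the long exact sequence of $\Hom_{\pGmod}(\Delta_r, -)$ along the subfiltration $0 = D_0 \subset \cdots \subset D_{r-1}$, with each $\Hom_{\pGmod}(\Delta_r, \Delta_i) = 0$ for $i < r$, yields $\Hom_{\pGmod}(\Delta_r, D_{r-1}) = 0$, hence the same after twisting. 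Corollary \ref{ExtAndExtan} now gives $\Ext_{\an}(\Delta_r(\delta), D_{r-1}(\delta)) = \Ext(\Delta_r(\delta), D_{r-1}(\delta))$, so the extension class of $D_r(\delta)$ automatically sits in $\Ext_{\an}$ and $D_r(\delta)$ is $F$-analytic, completing the induction.

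For the base case $r=1$, I must produce $\delta$ making $\Delta_1$ $F$-analytic from only the hypothesis $\End_{\pGmod}(\Delta_1) = L$. Exactly as in the proof of Lemma \ref{nonTrivExt}, for any $\beta, \beta' \in \Lie\Gamma$ the operator $\nabla_\beta|_{\Delta_1} - \nabla_{\beta'}|_{\Delta_1}$ is $\cR$-linear and $(\vphi_q,\Gamma)$-equivariant, hence lies in $\End_{\pGmod}(\Delta_1) = L$. Fixing $\beta_0$ and writing $\nabla_\beta = \nabla_{\beta_0} + c(\beta)\cdot\id$ with $c(\beta) \in L$, I multiply through by $\log\chi_\LT(\exp\beta)$ and use the $\Q_p$-linearity of $d\Gamma_\bullet$ and of $\log\chi_\LT\circ\exp$ to deduce that $\tilde c(\beta) := \log\chi_\LT(\exp\beta)\cdot c(\beta)$ is a $\Q_p$-linear functional $\Lie\Gamma \to L'$, for a suitable finite extension $L'/L$ containing $F$. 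A direct computation shows that twisting by a character $\delta$ shifts $\nabla_\beta|_{\Delta_1}$ by $\log\delta(\chi_\LT(\exp\beta))/\log\chi_\LT(\exp\beta) \cdot \id$; since every $\Q_p$-linear map $F \to L'$ arises as the differential of some continuous character of $F^\times$ (with $\delta(\vpi)$ chosen arbitrarily), I can pick $\delta$ so that the shift cancels $c(\beta)$ up to a $\beta$-independent constant, making $\Delta_1(\delta)$ $F$-analytic.

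The main obstacle is precisely this base case, which genuinely generalizes Theorem \ref{PreviousResearchOfBerger}: unlike Berger's argument for absolutely irreducible overconvergent Galois representations, I must work directly with $(\vphi_q,\Gamma)$-modules satisfying only $\End_{\pGmod} = L$ without a priori étaleness or absolute irreducibility, identifying the obstruction to $F$-analyticity as a scalar-valued $\Q_p$-linear functional on $\Lie\Gamma$ and explicitly constructing a character whose differential kills it. The inductive step, by contrast, fits together cleanly from the lemmas and corollary already established.
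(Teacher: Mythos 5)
Your proposal is correct and follows essentially the same two-step strategy as the paper: induction on $r$ with the inductive step powered by Lemma \ref{nonTrivExt} and Corollary \ref{ExtAndExtan}, and the $r=1$ base case handled by identifying the defect of $F$-analyticity as a scalar and killing it with a character twist. The only noticeable difference is in the base case: the paper fixes a $\Z_p$-basis $\beta_1,\ldots,\beta_n$ of $\Lie\Gamma$, extracts the scalars $c_i$, and then writes down $\delta$ explicitly on the torsion part and on $\chi_\LT(\gamma_i^{e_i})$ via $\exp(c_i\log\chi_\LT(\gamma_i^{e_i}))$ followed by root extraction; you instead argue coordinate-free, observing that $\tilde c(\beta)=\log\chi_\LT(\exp\beta)\cdot c(\beta)$ is a $\Q_p$-linear functional on $\Lie\Gamma$ and invoking the general fact that any such functional arises as the differential of a continuous character of $\cO_F^\times$ (after a finite scalar extension). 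That is the same calculation in abstract form — the paper's explicit construction is precisely a proof of the fact you are citing — so the two are mathematically identical; your version is a touch cleaner conceptually, while the paper's is more self-contained.
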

	
	\begin{proof}
		We prove it by induction on $r$.
		First, we prove the case $r=1$.
		Let $D$ be a $(\vphi_q, \Gamma)$-module over $\cR$ such that $\End_{\pGmod} (D) =L$.
		We choose a $\Z_p$-basis $\beta_1,\ldots,\beta_n$ of $\Lie\Gamma$.
		For any $1\leq i \leq n$, we have $\nabla_{\beta_1}-\nabla_{\beta_i} \in \End_{\pGmod}(D).$
		Thus there exist $c_1=0,c_2,\ldots,c_n\in L$ such that $\nabla_{\beta_1} - \nabla_{\beta_i} = c_i \cdot \id_D$ for any $1\leq i \leq n$.
		
		Now we choose a $\Z_p$-basis $\gamma_1, \ldots, \gamma_n$ of the free part of $\Gamma$ such that $\gamma_i^{m_i} = \exp(p^{m_i} \beta_i)$ for some integers $m_i\geq 0$.
		We fix a finite extension $L'$ of $L$ and a character $\delta\cln F^\times \to (L')^\times$.
		By Lemma \ref{calculationFormulaOfLog} , for any $x \in D(\delta)$, we can compute
		\[
			\nabla_{\beta_i}|_{D(\delta)} (x) = \left( \id_{L'}\otimes \nabla_{\beta_i}|_D\right) (x) + \frac{\log\delta(\chi_\LT(\gamma_i))}{\log\chi_\LT(\gamma_i)} x. 
		\]
		Thus, if we find a character $\delta$ such that the equality
		\[
			c_i + \frac{\log\delta(\chi_\LT(\gamma_1))}{\log \chi_\LT(\gamma_1)} - \frac{\log\delta(\chi_\LT(\gamma_i))}{\log \chi_\LT(\gamma_i)} = 0
		\]
		holds for each $1\leq i \leq n$, then $D(\delta)$ is $F$-analytic.
		Actually, we can do it: on the torsion part of $F^\times$ and on $\chi_\LT(\gamma_1)$, put $\delta = 1$. On $\delta(\chi_\LT(\gamma_2)), \ldots, \delta(\chi_\LT(\gamma_n))$, put
		\[
			\delta(\chi_\LT(\gamma_i^{e_i})) = \exp\left( c_i \log\chi_\LT(\gamma_i^{e_i}) \right),
		\]
		where $e_i$ is a sufficiently large integer such that the right-hand side is well-defined.
		Finally, define $\delta(\chi_\LT(\gamma_i))$ as an $e_i$-th root of $\delta(\chi_\LT(\gamma_i))$
		(hence we must extend $L$ in general).
		
		Next we suppose that the statement for $r-1$ holds.
		By extending scalar and twisting by a character, we may assume that, for any $1\leq i \leq r-1$, $D_i$ is $F$-analytic.
		Then we must show that $D_r$ is $F$-analytic.
		By the assumption (a), (c) and Lemma \ref{nonTrivExt}, $\Delta_r$ is $F$-analytic.
		Now we apply the functor $\Hom_{\pGmod}(\Delta_r,\bullet)$ to the short exact sequences
		\[\begin{alignedat}{5}
			 0 & \to \Delta_1 &&\to D_2 &&\to \Delta_2 &&\to 0,  \\
			 0 &\to D_2 &&\to D_3 &&\to \Delta_2 &&\to 0,  \\
			  & && \hspace{7mm} \rotatebox{90}{$\cdots$} && &&  \\
			 0 &\to D_{r-1} &&\to D_r &&\to \Delta_r &&\to 0.  \\
		\end{alignedat}\]
		Then, by the assumption (b), we inductively obtain $\Hom_{\pGmod}(\Delta_r, D_i) = 0$ for any $1 \leq i \leq r-1$.
		By Corollary \ref{ExtAndExtan}, we have $\Ext_{\an}(\Delta_r, D_{r-1}) = \Ext(\Delta_r, D_{r-1}).$
		Therefore $D_r$ is $F$-analytic and we have the conclusion.
	\end{proof}
	
	This theorem, together with Theorem \ref{equivBetTwoAnalyticity}, implies the following:
	
	\begin{cor}[{Theorem \ref{MainThm}}]
		Let $V$ be an overconvergent $L$-representation of $\sG_F$ such that $\cR\otimes_{\cE^\dagger} \bfD^\dagger(V)$ satisfies the assumptions of Theorem \ref{trueMainThm}.
		Then there exist a finite extension $L'$ of $L$ and a character $\delta\cln \sG_F \to (L')^\times$ such that
		$V\otimes_L L'(\delta)$ is $F$-analytic.
	\end{cor}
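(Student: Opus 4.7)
My approach is to deduce the corollary from Theorem \ref{trueMainThm} and Theorem \ref{equivBetTwoAnalyticity} by transferring the twist on $F^\times$ produced by the former into a Galois twist via local class field theory.

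First I would set $D = \cR\otimes_{\cE^\dagger}\bfD^\dagger(V)$. By hypothesis, $D$ is a $(\vphi_q,\Gamma)$-module over $\cR$ admitting a filtration that satisfies conditions (a)--(c) of Theorem \ref{trueMainThm}, so applying that theorem directly yields a finite extension $L'/L$ and a character $\delta_F\cln F^\times \to (L')^\times$ such that $D_i(\delta_F)$ is $F$-analytic for every $0\leq i\leq r$; in particular $D(\delta_F) = D_r(\delta_F)$ is $F$-analytic.

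Next I would convert $\delta_F$ into a Galois character using the local reciprocity map. Let $\rec\cln F^\times \to \sG_F^{\ab}$ be the Lubin-Tate reciprocity map, normalized so that $\chi_\LT\circ\rec$ is the projection $F^\times \twoheadrightarrow \cO_F^\times$ (composed with inversion, in the geometric normalization) and so that $\rec(\vpi)$ acts as a Frobenius lift on the maximal unramified subextension of $\Fb$. Define $\delta\cln \sG_F \to (L')^\times$ as the composition of the natural projection $\sG_F \twoheadrightarrow \sG_F^{\ab}$, the inverse of $\rec$, and $\delta_F$. The compatibility I need to check is that under the fully faithful functor $V\mapsto \cR_{L'}\otimes_{\cE_{L'}^\dagger}\bfD^\dagger(V)$, the Galois twist $V\mapsto V\otimes_L L'(\delta)$ corresponds exactly to the module twist $D\mapsto D(\delta_F)$ defined just before Theorem \ref{trueMainThm}: that is, on the $(\vphi_q,\Gamma)$-module side, the action of $\vphi_q$ gets multiplied by $\delta_F(\vpi)$ and the action of each $\gamma\in\Gamma$ by $\delta_F(\chi_\LT(\gamma))$. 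This is the standard translation between Galois characters and characters of $F^\times$ through $\rec$, but it has to be checked directly from the definitions because the normalization in the definition of $D(\delta_F)$ is what dictates the sign convention for $\rec$.

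Granting this compatibility, I obtain a canonical isomorphism
\[
\cR_{L'}\otimes_{\cE_{L'}^\dagger}\bfD^\dagger\bigl(V\otimes_L L'(\delta)\bigr) \isomto D(\delta_F)
\]
of $(\vphi_q,\Gamma)$-modules over $\cR_{L'}$. Since $L'(\delta)$ is one-dimensional, it is overconvergent by the remark following the definition of $\Rep_L^{\oc}(\sG_F)$; overconvergence is preserved by tensor products of overconvergent representations and by finite extension of scalars, so $V\otimes_L L'(\delta)$ again lies in $\Rep_{L'}^{\oc}(\sG_F)$. Theorem \ref{equivBetTwoAnalyticity} then applies and reduces the $F$-analyticity of $V\otimes_L L'(\delta)$ to that of the right-hand side $D(\delta_F)$, which has already been established. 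The only real obstacle is bookkeeping: pinning down the normalization of $\rec$ so that the Galois twist and the module twist match, after which every step is formal.
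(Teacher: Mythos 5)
Your proposal is correct and follows essentially the same route as the paper, which proves the corollary simply by combining Theorem \ref{trueMainThm} with Theorem \ref{equivBetTwoAnalyticity}; your write-up merely makes explicit the routine steps (passing from $\delta_F$ on $F^\times$ to a Galois character via reciprocity, matching the two notions of twist, and noting the twist stays overconvergent) that the paper leaves implicit.
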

	
	\providecommand{\bysame}{\leavevmode\hbox to3em{\hrulefill}\thinspace}
\providecommand{\MR}{\relax\ifhmode\unskip\space\fi MR }
\providecommand{\MRhref}[2]{%
  \href{http://www.ams.org/mathscinet-getitem?mr=#1}{#2}
}
\providecommand{\href}[2]{#2}

\end{document}